\numberwithin{equation}{section}
\newtheorem{theorem}[equation]{Theorem}
\newtheorem{lemma}[equation]{Lemma}
\newtheorem{proposition}[equation]{Proposition}
\newtheorem{corollary}[equation]{Corollary}
\theoremstyle{definition}
\newtheorem{definition}[equation]{Definition}
\theoremstyle{remark}
\newcommand*{\N}{\mathbb{N}}
\newcommand*{\Z}{\mathbb{Z}}
\newcommand*{\Q}{\mathbb{Q}}
\newcommand*{\dif}{\mathrm{d}}
\newcommand*{\m}{\mathrm{m}}
\def\<{\left\langle}
\def\>{\right\rangle}
\newcommand*{\inv}{^{-1}}
\newcommand*{\id}{\mathrm{id}}
\newcommand*{\E}{\mathbb{E}}
\newcommand{\aveN}{\frac{1}{N}\sum_{n=1}^N}
\newcommand{\aveFN}{\frac{1}{|F_N|}\sum_{n\in F_N}}
\newcommand{\aveFMn}{\frac{1}{|F_M|}\sum_{n\in F_M}}
\newcommand*{\HKZ}{\mathcal{Z}}
\newcommand*{\RTT}[1]{$\mathrm{RTT}(#1)$}
\begin{document}
\subjclass[2010]{28D05 (Primary), 37A05 (Secondary)}
\title{Cube spaces and the multiple term return times theorem}
\author{Pavel Zorin-Kranich}
\address
{Korteweg-de Vries Institute for Mathematics\\
University of Amsterdam\\
P.O.\ Box 94248\\
1090 GE Amsterdam\\
The Netherlands}
\email{zorin-kranich@uva.nl}
\urladdr{http://staff.science.uva.nl/~pavelz/}
\keywords{Cube space, return times theorem, Wiener-Wintner theorem}
\begin{abstract}
We give a new proof of Rudolph's multiple term return times theorem based on Host-Kra structure theory.
Our approach provides characteristic factors for all terms, works for arbitrary tempered F\o{}lner sequences and also yields a multiple term Wiener-Wintner-type return times theorem for nilsequences.
\end{abstract}
\maketitle
\allowdisplaybreaks

\section{Introduction}
In this article we are concerned with universally good weights for pointwise convergence of ergodic averages along a tempered F\o{}lner sequence $(F_{N})$ in $\Z$, i.e., sequences $(a_{n})$ such that, for every measure-preserving system $(Y,S)$ and every $g\in L^{\infty}(Y)$, the averages
\[
\lim_{N} \aveFN a_{n} g(S^{n}y)
\]
converge for a.e.\ $y\in Y$.
Bourgain's return times theorem \cite{MR1557098} asserts that, given any ergodic measure-preserving system $(X,T)$, for every $f\in L^{\infty}(X)$ and a.e.\ $x\in X$ the sequence of weights $a_{n}=f(T^{n}x)$ is universally good for pointwise convergence along the standard F\o{}lner sequence $F_{N}=[1,N]$.
The name ``return times theorem'' comes from the case of a characteristic function $f=1_{A}$, $A\subset X$.
Then the theorem can be equivalently formulated by saying that, for a.e.\ $x\in X$, the pointwise ergodic theorem on any system $Y$ holds along the sequence of return times of $x$ to $A$.

This has been extended to averages involving multiple terms by Rudolph \cite{MR1489899}.
In order to formulate his result and for future convenience we now introduce some notation.
By a \emph{system} we mean an ergodic regular measure-preserving system $(X,\mu,T)$ with a distinguished countable subset $D\subset L^{\infty}(X)$ that is sufficiently large (we will formulate the precise condition on $D$ in Definition~\ref{def:D}).
\begin{definition}
\label{def:uae}
Let $P$ be a statement about ergodic regular measure-preserving systems $(X_{i},\mu_{i},T_{i})$, functions $f_{i}\in L^{\infty}(X_{i})$ and points $x_{i}\in X_{i}$, $i=0,\dots,k$.
We say that $P$ holds for \emph{universally almost every (u.a.e.)} tuple $x_{0},\dots,x_{k}$ if
\begin{itemize}
\item[$(0)$] For every system $(X_{0},\mu_{0},T_{0},D_{0})$ there exists a set of full measure $\tilde X_{0}\subset X_{0}$ such that
\item[$(1)$] for every system $(X_{1},\mu_{1},T_{1},D_{1})$ there exists a measurable set $\tilde X_{1}\subset X_{0}\times X_{1}$ such that for every $\vec x_{0}\in\tilde X_{0}$ the set $\{x_{1}: (\vec x_{0},x_{1})\in\tilde X_{1}\}$ has full measure in $X_{1}$ and
\item[]\begin{center}$\vdots$\end{center}
\item[$(k)$] for every system $(X_{k},\mu_{k},T_{k},D_{k})$ there exists a measurable set $\tilde X_{k}\subset X_{0}\times\dots\times X_{k}$ such that for every $\vec x_{k-1}\in\tilde X_{k-1}$ the set $\{x_{k}: (\vec x_{k-1},x_{k})\in\tilde X_{k}\}$ has full measure in $X_{k}$ and
\end{itemize}
we have $P(f_{0},\dots,f_{k},\vec x_{k})$ for every $\vec x_{k}\in\tilde X_{k}$ and every $f_{i}\in D_{i}$, $i=0,\dots,k$.
\end{definition}
With this convention Rudolph's multiple term return times theorem says that for every $k\in\N$ the averages
\begin{equation}
\label{eq:av}
\aveN f_{0}(T_{0}^{n}x_{0})\cdots f_{k}(T_{k}^{n}x_{k})
\end{equation}
converge for u.a.e.\ $x_{0},\dots,x_{k}$.
We call this statement \RTT{k}.
Birkhoff's pointwise ergodic theorem \cite{0003.25602} is essentially \RTT{0} and Bourgain's return times theorem is \RTT{1}.
More about the history of these and related results can be found in a recent survey by Assani and Presser \cite{2012arXiv1209.0856A}.

It is known that the Host-Kra-Ziegler pro-nilfactor $\HKZ_{k}(X_{0})$ is characteristic for the first term in \RTT{k} in the sense that if $f_{0}\perp\HKZ_{k}(X_{0})$, then the averages \eqref{eq:av} converge to zero u.a.e.\ \cite{MR2901351}*{Theorem 4}.
However, the proof of this fact hitherto depends on the convergence result \RTT{k}.

In this article we prove both results, \RTT{k} and characteristicity, simultaneously by induction on $k$ using the Host-Kra structure theory and extend them to arbitrary tempered F\o{}lner sequences.
We also obtain the following Wiener-Wintner return times theorem for nilsequences thereby generalizing \cite{MR1357765}*{Theorem 1}.
\begin{theorem}[Wiener-Wintner return times theorem for nilsequences]
\label{thm:WWRTT}
Let $k,l\in\N$ and $f_{i}\in L^{\infty}(X_{i})$, $i=0,\dots,k$.
Then for u.a.e. $x_{0},\dots,x_{k}$ and every $l$-step nilsequence $(a_{n})_{n}$ the averages
\[
\aveFN a_{n} \prod_{i=0}^{k}f_{i}(T_{i}^{n}x_{i})
\]
converge (to zero if in addition $f_{0}\perp\HKZ_{k+l}(X_{0})$ or $f_{i}\perp\HKZ_{k+l+1-i}(X_{i})$ for some $i=1,\dots,k$).
\end{theorem}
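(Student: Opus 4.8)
The plan is to prove Theorem~\ref{thm:WWRTT} not in isolation but simultaneously with \RTT{k} and with the characteristicity statements for all the terms, by a single induction on $k$; it is this joint induction, rather than a reduction of one of the three to another, that makes the scheme go through. In the inductive step I would first absorb the nilsequence weight into the product. An $l$-step nilsequence has the form $a_{n}=F(g^{n}\cdot e\Gamma)$ for a continuous $F$ on an $l$-step nilmanifold $G/\Gamma$ and some $g\in G$; passing to the orbit closure $\ol{\{g^{n}e\Gamma\}}$ we may assume the associated nilsystem $(X_{k+1},T_{k+1})$ is ergodic, so that the weighted average becomes one of the form \eqref{eq:av} with an extra coordinate frozen at $e\Gamma$. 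The only --- but decisive --- difference from \RTT{k+1} is that convergence is demanded for \emph{every} such nilsystem and every $F$ at once, i.e.\ with Wiener--Wintner uniformity in $(a_{n})$. The case $k=0$ of the whole statement is essentially the nilsequence Wiener--Wintner theorem of Host and Kra, extended to tempered F\o{}lner sequences, and serves as the base of the induction.

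The heart of the matter is the vanishing clause. Assume first $f_{0}\perp\HKZ_{k+l}(X_{0})$. I would iterate the van der Corput inequality along $(F_{N})$ applied to $u_{n}=a_{n}\prod_{i=0}^{k}f_{i}(T_{i}^{n}x_{i})$: one application replaces $a_{n}$ by the $l$-step nilsequence $\ol{a_{n}}a_{n+h}$ and each $f_{i}$ by $\ol{f_{i}}\cdot(f_{i}\circ T_{i}^{h})$, so after $k+l$ steps one is averaging over a combinatorial cube $\{0,1\}^{k+l}$ of shifted copies of the data. Organizing these shifts through the Host--Kra cube spaces $X_{i}^{[m]}$ with their measures --- which is exactly the purpose of the cube-space formalism --- and feeding the surviving multilinear average into the inductive hypothesis, which concerns strictly fewer free terms and is run inside the cube system (again a system in the sense of Definition~\ref{def:uae}), one bounds the outer $\limsup$ by a Gowers--Host--Kra-type seminorm of $f_{0}$ of degree $k+l+1$. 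That seminorm vanishes precisely when $\E(f_{0}\mid\HKZ_{k+l}(X_{0}))=0$, so the hypothesis forces the limit to be $0$ u.a.e. The case $f_{i}\perp\HKZ_{k+l+1-i}(X_{i})$ with $i\ge 1$ is handled the same way after first applying van der Corput $i$ times to bring the coordinate $i$ into the first position, which shifts the relevant index from $k+l$ down to $k+l+1-i$.

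It remains to prove convergence without the orthogonality hypotheses. Here I would pass to a structured situation using the characteristicity just established: replace $f_{0}$ by $\E(f_{0}\mid\HKZ_{k+l}(X_{0}))$ and each $f_{i}$, $i\ge1$, by $\E(f_{i}\mid\HKZ_{k+l+1-i}(X_{i}))$, so that every $f_{i}$ lives on a pro-nilfactor. Since a pro-nilfactor is an inverse limit of nilsystems and the averages depend continuously in $L^{2}$ on the $f_{i}$, a further approximation reduces to the case that every $X_{i}$ is a nilsystem. Then $a_{n}\prod_{i=0}^{k}f_{i}(T_{i}^{n}x_{i})$ is a continuous function evaluated along a single orbit in the product nilmanifold $\prod_{i}X_{i}\times G/\Gamma$; Leibman's equidistribution theorem gives convergence of the Ces\`aro averages from every starting point, and Lindenstrauss's pointwise ergodic theorem on this nilsystem (together with unique ergodicity of its ergodic components) upgrades this to convergence along $(F_{N})$. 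Uniformity in $(a_{n})$ follows as in \cite{MR1357765}: $g$ ranges over a second-countable group and $F$ over a separable space of continuous functions, so a single u.a.e.\ full-measure set serves all nilsequences after a diagonal and density argument.

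The step I expect to be the main obstacle is making the van der Corput iteration interface cleanly with the induction. One has to verify that the relevant measures on the cube spaces $X_{i}^{[m]}$ define ergodic systems carrying suitable countable subalgebras; that the multilinear average surviving the iteration is genuinely of the form governed by a lower case of the induction, with the old first term replaced by a cube average one already knows to converge; and --- most delicately --- that the nested full-measure quantifier structure of Definition~\ref{def:uae}, including the passage from ``a.e.\ nilsequence'' to ``every nilsequence'', is respected at each stage. A pervasive technical point is that for a general tempered F\o{}lner sequence every auxiliary convergence input --- van der Corput, Birkhoff, and the nilequidistribution step --- must be set up for $(F_{N})$ via Lindenstrauss's theorem from the outset rather than borrowed from the classical $[1,N]$ setting.
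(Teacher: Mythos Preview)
Your convergence argument --- reducing via characteristicity to continuous functions on nilfactors and then invoking equidistribution on the product nilmanifold --- agrees with the paper's Section~\ref{sec:ww}. The gap is in the vanishing clause. Iterating van der Corput on the scalar sequence $u_{n}=a_{n}\prod_{i}f_{i}(T_{i}^{n}x_{i})$ replaces each $f_{i}$ by a multiplicative derivative $\ol{f_{i}}\cdot T_{i}^{h}f_{i}$ and the weight by a differenced nilsequence, but it never removes any of the $k+1$ systems $X_{0},\dots,X_{k}$: after any number of iterations there are still $k+1$ free coordinates, so the inductive hypothesis for $k-1$ does not apply. Your assertion that the surviving average ``concerns strictly fewer free terms'' is exactly the step that is unjustified. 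Moreover, to convert the nested $\limsup$'s produced by van der Corput into the seminorm $\|f_{0}\|_{U^{k+l+1}}$ you would need the inner averages to converge pointwise u.a.e.\ to the correct cube integrals --- and that is precisely the content of the cube-space return times theorem (Theorem~\ref{thm:RTT-cube}), which you do not yet have at this stage of the induction.

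The paper closes the induction not with van der Corput but with the BFKO criterion (Proposition~\ref{prop:BFKO}): this is the device that reduces the u.a.e.\ question in $k+1$ variables to a statement about $(\prod_{i\le k}X_{i}^{[l]})^{2}$, which is then handled by the inductive hypothesis \RTT{k,l+1} on higher cubes together with the measure-theoretic change-of-order Lemma~\ref{lem:cube-change-order}. Once Corollary~\ref{cor:orth} delivers $\otimes_{i}f_{i}\perp\HKZ_{l}(\m_{\vec x})$ for u.a.e.\ $\vec x$, the vanishing clause for every $l$-step nilsequence follows in one stroke from the Wiener--Wintner theorem for nilsequences (Theorem~\ref{thm:WW}) applied on the single ergodic system $(\prod_{i}X_{i},\m_{\vec x})$; no van der Corput enters. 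You are right to flag the interface between the iteration and the nested u.a.e.\ quantifiers as the main obstacle --- the point is that BFKO, not van der Corput, is the tool that resolves it.
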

The general strategy is to consider not only the $X_{i}$'s but also the Host-Kra cube spaces of all orders simultaneously and to use a version of the convergence criterion due to Bourgain, Furstenberg, Katznelson and Ornstein (Proposition~\ref{prop:BFKO}) to pass from $k$ to $k+1$.
After a preparatory Section~\ref{sec:tools} we formulate our central convergence Theorem~\ref{thm:RTT-cube} on cube spaces, generalizing \RTT{k} and giving information about characteristic factors.
The remaining part of Section~\ref{sec:RTT-cube} is devoted to the proof of Theorem~\ref{thm:RTT-cube}.
Theorem~\ref{thm:WWRTT} is then proved in Section~\ref{sec:ww}.

\section{Notation and tools}
\label{sec:tools}
\subsection{Tempered F\o{}lner sequences}
A sequence $(F_{N})$ of finite subsets of $\Z$ is called a \emph{F\o{}lner sequence} if for every $k\in\Z$ we have $|F_{N} \Delta (F_{N}+k)|/|F_{N}| \to 0$ as $N\to\infty$.
A F\o{}lner sequence is called \emph{tempered} if there exists a constant $C$ such that for all $N$ one has
\[
\big| \cup_{M<N} F_{N}-F_{M} \big| \leq C |F_{N}|.
\]
Throughout the article we fix a tempered F\o{}lner sequence $(F_{N})$.

Let $(X,\mu,T)$ be an ergodic measure-preserving system and $f\in L^{\infty}(X)$.
A point $x\in X$ is called \emph{($\mu$-)generic} for $f$ if
\[
\aveFN f(T^{n}x) \to \int f \dif\mu
\quad
\text{as } N\to\infty.
\]
A point $x\in X$ is called \emph{fully ($\mu$-)generic} for $f$ if it is ($\mu$-)generic for every function in the closed $T$-invariant algebra spanned by $f$.
By the Lindenstrauss pointwise ergodic theorem \cite{MR1865397}, for every $f\in L^{1}(X)$ a.e.\ $x\in X$ is generic.
Consequently, for every $f\in L^{\infty}(X)$ a.e.\ $x\in X$ is fully generic.

\subsection{Ergodic decomposition}
For the purposes of this article we find it illuminating to think of the ergodic decomposition in a particular way (that will be generalized in Section~\ref{sec:RTT-cube}).
Let $(X,\mu,T)$ be a regular ergodic measure-preserving system, i.e.\ $X$ is a compact metric space, $T:X\to X$ is an invertible continuous map and $\mu$ is a $T$-invariant ergodic Borel probability measure.
By the pointwise ergodic theorem a.e.\ $x\in X$ is generic for some $T$-invariant Borel probability measure $\m_{x}$ on $X$, i.e.\ $\aveFN f(T^{n}x) \to \int f \dif \m_{x}$ for every $f\in C(X)$.
It follows easily that the function $x\mapsto\m_{x}$ is measurable and
\begin{equation}
\label{eq:m-disint}
\mu = \int\m_{x} \dif\mu(x)
\end{equation}
In particular, for $\mu$-a.e.\ $x$ the measure $\m_{y}$ is defined for $\m_{x}$-a.e.\ $y$.
To see that $\m_{x}$ is ergodic for $\mu$-a.e.\ $x$ it suffices to verify that
\begin{equation}
\label{eq:my-mx}
\int \int \Big| \int f \dif\m_{y} - \int f \dif\m_{x} \Big|^{2} \dif\m_{x}(y) \dif\mu(x)
= 0 \text{ for every } f\in C(X),
\end{equation}
since this says precisely that the ergodic averages of $f$ converge pointwise $\m_{x}$-a.e.\ to an $\m_{x}$-essentially constant function for $\mu$-a.e.\ $x$, and the latter full measure set can be chosen independently from $f$ since $C(X)$ is separable.
By definition of $\m_{x},\m_{y}$, the dominated convergence theorem and \eqref{eq:m-disint} we can rewrite the integral in \eqref{eq:my-mx} as
\begin{multline*}
2 \lim_{N} \int (\aveFN T^{n}f)^{2}(x) \dif\mu(x)
- 2 \lim_{N} \int (\aveFN T^{n}f)(x) \int (\aveFN T^{n}f)(y) \dif\m_{x}(y) \dif\mu(x)\\
=
2 \lim_{N} \int (\aveFN T^{n}f)^{2}(x) \dif\mu(x)
- 2 \lim_{N} \lim_{M} \int (\aveFN T^{n}f)(x) (\aveFMn T^{m}f)(x) \dif\mu(x),
\end{multline*}
and this vanishes by the pointwise ergodic theorem and the dominated convergence theorem.

\subsection{Nilsystems}
A \emph{($k$-step) nilmanifold} is a homogeneous space $G/\Gamma$, where $G$ is a ($k$-step) nilpotent Lie group and $\Gamma$ is a discrete cocompact subgroup.
A nilmanifold is always implicitly endowed with the Haar measure, the unique left-$G$-invariant Borel probability measure.
A \emph{($k$-step) nilsystem} is a measure-preserving system of the form $(X,T)$, where $X=G/\Gamma$ is a ($k$-step) nilmanifold and $Tg\Gamma = ag\Gamma$ for some $a\in G$ and all $g\Gamma\in G/\Gamma$.
A \emph{basic ($k$-step) nilsequence} is a sequence of the form $a_{n}=F(T^{n}x)$, where $(X,T)$ is a ($k$-step) nilsystem, $x\in X$ and $F\in C(X)$ is a continuous function.
A \emph{$k$-step nilsequence} is a uniform limit of basic $k$-step nilsequences (it would be more consistent to call basic nilsequences ``nilsequences'' and nilsequences ``pro-nilsequences'' but we follow the established terminology).
A \emph{$k$-step pro-nilsystem} is an inverse limit of $k$-step nilsystems in the category of measure-preserving systems (equivalently, in the category of topological dynamical systems with an invariant Borel probability measure \cite{MR2600993}*{Theorem A.1}).
A \emph{(pro-)nilfactor} of a measure-preserving dynamical system is a factor that is also a (pro-)nilsystem.

It is a classical fact that the Kronecker factor of an ergodic nilsystem $(G/\Gamma,T)$ is the canonical map $G/\Gamma \to G/\Gamma G_{2}$, where $G_{2}=[G,G]$.
The nilmanifold $G/\Gamma G_{2}$ is a compact homogeneous space of the abelian Lie group $G/G_{2}$, hence a disjoint union of finitely many tori.
The fibers of the projection $G/\Gamma \to G/\Gamma G_{2}$ are isomorphic to the homogeneous space $G_{2}/\Gamma_{2}$, where $\Gamma_{2}=\Gamma\cap G_{2}$.
By a result of Mal'cev $\Gamma_{2}$ is a cocompact subgroup of $G_{2}$ \cite{MR0028842}, so each such fiber is also a nilmanifold.

\subsection{Host-Kra structure theory}
We recall the basic definitions and main results surrounding the uniformity seminorms \cite{MR2150389}.
Let $(X,\mu,T)$ be a regular ergodic measure-preserving system.
The cube measures $\mu^{[l]}$ on $X^{[l]}:=X^{2^{l}}$ are defined inductively starting with $\mu^{[0]}:=\mu$.
In the inductive step, given $\mu^{[l]}$, fix an ergodic decomposition
\[
\mu^{[l]}=\int_{X^{[l]}} \m_{x} \dif\mu^{[l]}(x)
\]
as in \eqref{eq:m-disint}.
The space on which $\m_{x}$ is defined can be inferred from the subscript $x$.
Define
\begin{equation}
\label{eq:mul+1}
\mu^{[l+1]}:=\int_{X^{[l]}} \delta_{x}\otimes\m_{x} \dif\mu^{[l]}(x).
\end{equation}
To see that this coincides with the conventional definition one can use \eqref{eq:m-disint} and \eqref{eq:my-mx} to write the above integral as
\begin{multline}
\label{eq:mul+1-conventional}
\mu^{[l+1]}
=
\int\int \delta_{y}\otimes\m_{y} \dif\m_{x}(y)\dif\mu^{[l]}(x)\\
=
\int\int \delta_{y}\otimes\m_{x} \dif\m_{x}(y)\dif\mu^{[l]}(x)
=
\int \m_{x}\otimes\m_{x} \dif\mu^{[l]}(x).
\end{multline}
The \emph{uniformity seminorms} are
\begin{equation}
\label{eq:uniformity-seminorm-integral}
\| f \|_{U^{l+1}}^{2^{l+1}} := \int \otimes_{\epsilon\in \{0,1\}^{l+1}} f \dif\mu^{[l+1]} = \int \E\big( \otimes_{\epsilon\in \{0,1\}^{l}} f | \mathcal{I}^{[l]} \big)^{2} \dif\mu^{[l]},
\end{equation}
where $\mathcal{I}^{[l]}$ is the $T^{[l]}$-invariant sub-$\sigma$-algebra on $X^{[l]}$.
For $f^{\epsilon} \in L^{\infty}(X)$, $\epsilon\in\{0,1\}^{l}$, we will abbreviate $f^{[l]}:=\otimes_{\epsilon\in\{0,1\}^{l}}f^{\epsilon}$.
The uniformity seminorms satisfy the \emph{Cauchy-Schwarz-Gowers inequality} \cite{MR2150389}*{Lemma 3.9.(1)}
\begin{equation}
\label{eq:CSG}
\Big| \int f^{[l]} \dif\mu^{[l]} \Big| \leq \prod_{\epsilon\in\{0,1\}^{l}} \|f^{\epsilon}\|_{U^{l}}.
\end{equation}
The $U^{l+1}$-seminorms determine factors $\HKZ_{l}(X)$ by the relation
\[
f\perp L^{2}(\HKZ_{l}(X)) \iff \|f\|_{U^{l+1}}=0
\text{ for } f\in L^{\infty}(X).
\]
The main result of Host and Kra \cite{MR2150389} is that the factors $\HKZ_{l}(X)$ are $l$-step pro-nilsystems.

Our basic tool for proving convergence pointwise a.e.\ is the Wiener-Wintner theorem for nilsequences \cite{MR2544760}*{Theorem 2.22}.
We use the following version for tempered F\o{}lner sequences.
\begin{theorem}[{Wiener-Wintner for nilsequences \cite{arxiv:1208.3977}}]
\label{thm:WW}
Let $(X,T)$ be a regular measure-preserving system.
Then for every $f\in L^\infty(X)$ there exists a set $X'\subset X$ of full measure such that for every $x\in X'$ the averages
\[
\aveFN a_n f(T^n x)
\]
converge for every nilsequence $(a_n)$ as $N\to\infty$.
If in addition $f \perp \HKZ_{l}$ for some $l\in\N$ then the limit is zero for every $l$-step nilsequence and every $x\in X$ that is fully generic for $f$.
\end{theorem}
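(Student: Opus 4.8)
The plan is to deduce the first (unconditional convergence) assertion of Theorem~\ref{thm:WW} from the second (the decay statement when $f\perp\HKZ_{l}$), and to prove the latter by induction on the nilpotency step $l$ using van der Corput together with the recursive description of the uniformity seminorms. For the reduction, fix $l$ and split $f=\E(f\mid\HKZ_{l})+f'$ with $f':=f-\E(f\mid\HKZ_{l})\perp\HKZ_{l}$. Given an $l$-step nilsequence $(a_{n})$, the contribution of $f'$ tends to $0$ for every $x$ fully generic for $f'$ by the second assertion. For the contribution of $g:=\E(f\mid\HKZ_{l})$ one uses that $\HKZ_{l}(X)$ is an $l$-step pro-nilsystem: approximate $g$ in $L^{1}$ by continuous functions $g_{m}$ on genuine $l$-step nilfactors. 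For fixed $x$ the sequence $n\mapsto a_{n}g_{m}(T^{n}x)$ is a basic nilsequence (a product of basic nilsequences on a product nilsystem), so its F\o{}lner averages converge by the unique ergodicity of nilsystems on orbit closures, while the error is controlled by $\limsup_{N}|\aveFN a_{n}(g-g_{m})(T^{n}x)|\le\|a\|_{\infty}\int|g-g_{m}|\dif\mu$, which tends to $0$ with $m$ for $x$ generic for $|g-g_{m}|$. Hence the averages converge for $x$ in a full-measure set $X'_{l}$, and taking $X'=\bigcap_{l}X'_{l}$ and recalling that every nilsequence is $l$-step for some $l$ gives the first assertion.

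For the second assertion I would prove, by induction on $l$, the quantitative statement that for every $l$-step nilsequence $(a_{n})$ with $\|a\|_{\infty}\le 1$ there is a modulus $\omega$ with $\omega(0+)=0$ such that $\limsup_{N}|\aveFN a_{n}f(T^{n}x)|\le\omega(\|f\|_{U^{l+1}})$ whenever $\|f\|_{\infty}\le 1$ and $x$ is fully generic for $f$; specializing to $\|f\|_{U^{l+1}}=0$ yields the claim. The base case $l=0$ is immediate: a $0$-step nilsequence is constant and $f\perp\HKZ_{0}$ means $\int f\dif\mu=0$, so genericity forces convergence to $0$. For the inductive step reduce to a basic nilsequence $a_{n}=F(g^{n}\Gamma)$ on a fixed $l$-step nilmanifold $Y=G/\Gamma$ and decompose $F$ into vertical Fourier characters along the central torus $Z/(Z\cap\Gamma)$, $Z=G_{l}$, with a uniformly small tail (Fej\'er summation, $F$ continuous). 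The zero-frequency part descends to the $(l-1)$-step factor $G/(Z\Gamma)$, so the inductive hypothesis applies directly after noting the monotonicity $\|f\|_{U^{l}}\le\|f\|_{U^{l+1}}$.

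For a single nonzero vertical character $\chi$ I would apply a van der Corput lemma valid for F\o{}lner sequences to $u_{n}:=a_{n}f(T^{n}x)$, bounding $\limsup_{N}|\aveFN u_{n}|^{2}$ by $\limsup_{H}\frac{1}{H}\sum_{h=1}^{H}\limsup_{N}|\aveFN\overline{u_{n+h}}u_{n}|$. Here $\overline{u_{n+h}}u_{n}=\overline{a_{n+h}}a_{n}\cdot g_{h}(T^{n}x)$ with $g_{h}:=f\cdot\overline{T^{h}f}$, and the key computation is that $\overline{a_{n+h}}a_{n}$ is an $(l-1)$-step nilsequence: it is the restriction of $(z,w)\mapsto\overline{F(z)}F(w)$ to the twisted diagonal $\{(g^{h}z,z)\}\cong Y$, on which this function is $Z$-invariant and hence descends to the $(l-1)$-step nilmanifold $G/(Z\Gamma)$. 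These derivative nilsequences live on one fixed $(l-1)$-step nilmanifold, so the inductive hypothesis applies with an $h$-uniform modulus $\omega_{l-1}$, giving $\limsup_{N}|\aveFN\overline{u_{n+h}}u_{n}|\le\omega_{l-1}(\|g_{h}\|_{U^{l}})$; here $x$ fully generic for $f$ is fully generic for $g_{h}$, since $g_{h}$ lies in the closed $T$-invariant algebra generated by $f$. Combined with the standard seminorm recursion $\lim_{H}\frac{1}{H}\sum_{h=1}^{H}\|f\cdot\overline{T^{h}f}\|_{U^{l}}^{2^{l}}=\|f\|_{U^{l+1}}^{2^{l+1}}$ from \cite{MR2150389}, this will close the induction.

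The main obstacle is this last coupling: converting the averaged bound $\limsup_{H}\frac{1}{H}\sum_{h=1}^{H}\omega_{l-1}(\|g_{h}\|_{U^{l}})$ into a modulus of $\|f\|_{U^{l+1}}$ alone. This forces the inductive bound to be genuinely \emph{quantitative} and uniform over the compact family of derivative nilsequences, after which one splits the $h$ according to whether $\|g_{h}\|_{U^{l}}^{2^{l}}$ exceeds a threshold $\delta$ — the small part contributes at most $\omega_{l-1}(\delta^{1/2^{l}})$, while the large part has density at most $\|f\|_{U^{l+1}}^{2^{l+1}}/\delta$ by the recursion — and then optimizes in $\delta$. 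Securing this uniform modulus, together with the F\o{}lner van der Corput inequality and the vertical Fourier truncation, is where the real work lies.
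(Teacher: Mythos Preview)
The paper does not prove Theorem~\ref{thm:WW}; it is quoted from \cite{arxiv:1208.3977} (extending \cite{MR2544760}*{Theorem 2.22} to tempered F\o{}lner sequences) and used as a black-box tool. Your outline is precisely the standard argument carried out in that reference: deduce the convergence statement from the decay statement via the structure theorem and $L^{1}$-approximation by continuous functions on nilfactors, and prove the decay statement by induction on the step $l$ using vertical Fourier decomposition and van der Corput, feeding in the recursion for the uniformity seminorms.

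The obstacle you flag at the end is real and is exactly why the inductive statement must be the \emph{uniform} version from the outset: one fixes an $l$-step nilmanifold $G/\Gamma$ and proves that a single modulus $\omega$ works simultaneously for all base points $y\in G/\Gamma$, all translations $g\in G$, and all $F$ in a fixed equicontinuous family on $G/\Gamma$, not merely for one nilsequence $(a_{n})$. With the hypothesis phrased this way, the derivative sequences $n\mapsto\overline{a_{n+h}}a_{n}$ for varying $h$ form an equicontinuous family on the fixed $(l-1)$-step quotient $G/(G_{l}\Gamma)$, the inductive hypothesis supplies an $h$-independent $\omega_{l-1}$, and your threshold-in-$\delta$ argument combined with the recursion $\lim_{H}H^{-1}\sum_{h\le H}\|f\cdot\overline{T^{h}f}\|_{U^{l}}^{2^{l}}=\|f\|_{U^{l+1}}^{2^{l+1}}$ closes the induction. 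So your sketch matches the cited proof; the only adjustment is that the quantitative claim you intend to induct on should be stated uniformly over the nilmanifold rather than for a single nilsequence, which you implicitly recognize but do not build into the hypothesis.
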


We also need the classical fact that the Kronecker factor is characteristic for $L^{2}$ convergence of ergodic averages with arbitrary bounded scalar weights, see e.g.\ \cite{MR2544760}*{Corollary 7.3} for a more general version.
\begin{lemma}
\label{lem:Z1-char-weight}
Let $(X,T)$ be an ergodic measure-preserving system and $f\in L^{2}(X)$ be orthogonal to $\HKZ_{1}(X)$.
Then for any bounded sequence $(a_{n})_{n}$ one has
\[
\lim_{N} \aveFN a_{n}T^{n}f = 0
\quad\text{in } L^{2}(X).
\]
\end{lemma}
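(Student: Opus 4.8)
The plan is to prove the $L^{2}$ convergence directly: square the norm and reduce the resulting double average to a single average by the van der Corput trick. Write $g_{N} := \aveFN a_{n}T^{n}f$; expanding the inner product gives
\[
\| g_{N} \|_{L^{2}}^{2} = \frac{1}{|F_{N}|^{2}} \sum_{n,m\in F_{N}} a_{n}\ol{a_{m}}\, \< T^{n-m}f, f\>,
\]
so everything is controlled by the positive definite correlation sequence $c_{h} := \< T^{h}f, f\>$, which is bounded by $\|f\|_{2}^{2}$. First I would apply the van der Corput inequality to the bounded sequence $v_{n} := a_{n}T^{n}f$ in $L^{2}(X)$, using the F\o{}lner property to discard the boundary terms (indices $n\in F_{N}$ with $n+h\notin F_{N}$ are negligible since $|F_{N}\,\Delta\,(F_{N}+h)|/|F_{N}|\to 0$); this gives, for every $H\in\N$,
\[
\limsup_{N} \| g_{N} \|_{L^{2}}^{2} \leq \frac{2}{H}\sum_{h=0}^{H-1} \limsup_{N} \Big| \aveFN \< v_{n+h},v_{n}\> \Big|.
\]
Since $\< v_{n+h},v_{n}\> = a_{n+h}\ol{a_{n}}\,c_{h}$ and $\big|\aveFN a_{n+h}\ol{a_{n}}\big|\leq\|(a_{n})_{n}\|_{\infty}^{2}$, the right-hand side is at most $\tfrac{2}{H}\|(a_{n})_{n}\|_{\infty}^{2}\sum_{h=0}^{H-1}|c_{h}|$.

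It then remains to show $\frac{1}{H}\sum_{h=0}^{H-1}|c_{h}| \to 0$ as $H\to\infty$. Since $T$ is unitary on $L^{2}(X)$, the spectral theorem provides a finite positive measure $\sigma$ on $\T$ --- the spectral measure of $f$ --- with $c_{h} = \int_{\T} e^{2\pi i h t}\,\dif\sigma(t)$. The hypothesis that $f$ is orthogonal to $\HKZ_{1}(X)$, the Kronecker factor, says exactly that $f$ has no component on any eigenfunction of $T$, and because $(X,\mu,T)$ is ergodic this is equivalent to $\sigma$ having no atoms. By Wiener's lemma $\frac{1}{H}\sum_{h=0}^{H-1}|c_{h}|^{2} \to \sum_{t\in\T}\sigma(\{t\})^{2} = 0$, whence by Cauchy--Schwarz $\frac{1}{H}\sum_{h=0}^{H-1}|c_{h}| \leq \big( \tfrac{1}{H}\sum_{h=0}^{H-1}|c_{h}|^{2} \big)^{1/2} \to 0$. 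Letting $H\to\infty$ in the bound of the previous paragraph yields $\limsup_{N}\|g_{N}\|_{L^{2}}^{2} = 0$, which is the assertion.

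The genuinely non-formal points are just two: the van der Corput inequality over an arbitrary F\o{}lner sequence --- routine once one notes that $\aveFN v_{n}$ and $\aveFN v_{n+h}$ are asymptotically equal --- and the classical spectral fact identifying $f\perp\HKZ_{1}(X)$ with non-atomicity of the spectral measure of $f$. If one prefers to avoid the spectral measure, the decay of $\frac{1}{H}\sum_{h}|c_{h}|^{2}$ can instead be read off by writing $|c_{h}|^{2} = \< (T\times T)^{h}(f\otimes\ol{f}),\, f\otimes\ol{f}\>_{L^{2}(\mu\times\mu)}$ and applying the mean ergodic theorem on $X\times X$, using that the $(T\times T)$-invariant functions are the closed span of the products $\chi\otimes\ol{\chi}$ over eigenfunctions $\chi$ of $T$. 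Finally, temperedness of $(F_{N})$ plays no role in this lemma: plain F\o{}lnerness is all that the mean convergence requires.
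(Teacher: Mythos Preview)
Your proof is correct and is precisely the classical argument the paper has in mind: the paper does not give its own proof of this lemma but merely records it as ``the classical fact that the Kronecker factor is characteristic for $L^{2}$ convergence of ergodic averages with arbitrary bounded scalar weights'' and points to \cite{MR2544760}*{Corollary~7.3}. Your van der Corput reduction to the Ces\`aro averages of $|c_{h}|$ followed by Wiener's lemma (or, equivalently, the mean ergodic theorem on $X\times X$) is exactly the standard route, and your closing remark that only the F\o{}lner property --- not temperedness --- is used here is accurate.
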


\subsection{Conventions about cube measures}
In the sequel we will have to consider systems for which a certain approximating procedure can be carried out within their distinguished sets.
\begin{definition}
\label{def:D}
A \emph{system} is a regular ergodic measure-preserving system $(X,\mu,T)$ with a distinguished set $D\subset L^{\infty}(X)$ that satisfies the following conditions.
\begin{enumerate}
\item (Cardinality) $D$ is countable.
\item (Density) $D$ contains an $L^{\infty}$-dense subset of $C(X)$.
\item (Algebra) $D$ is a $\Q$-algebra and is closed under absolute value.
\item (Decomposition) For every $f\in D$ and $l\in\N$ there exist decompositions
\begin{equation}
\makeatletter
\def\tagform@#1{\maketag@@@{\ignorespaces#1\unskip\@@italiccorr} (l)}
\makeatother
\tag{Dec}
\label{eq:dec}
f=f_{\perp}+f_{\HKZ,j}+f_{err,j},
\quad j\in\N,
\end{equation}
such that $f_{\perp},f_{\HKZ,j},f_{err,j}\in D$, $f_{\perp}\perp\HKZ_{l}(X)$, $f_{\HKZ,j} \in C(Z_{j})$, where $Z_{j}$ is a nilfactor of $\HKZ_{l}(X)$, $\|f_{err,j}\|_{L^{\infty}(\mu)}$ is uniformly bounded in $j$ and $\|f_{err,j}\|_{L^{1}(\mu)}\to 0$ as $j\to\infty$.
\end{enumerate}
\end{definition}
For any regular ergodic measure-preserving system $(X,\mu,T)$ any countable subset of $L^{\infty}(X)$ is contained in a set $D$ that satisfies the above conditions.
Indeed, by the Host-Kra structure theorem \emph{every} bounded function on $X$ has a decomposition of the form \ref{eq:dec}$(l)$ for every $l\in\N$.

As a first preparatory step to the identification of universal sets in Theorem~\ref{thm:WWRTT} we choose well-behaved full measure sets from the cube spaces associated to the individual systems.
\begin{lemma}
\label{lem:Yl}
Let $(X,\mu,T,D)$ be a system.
Then there exist measurable subsets $Y_{l}\subset X^{[l]}$ such that for every $l\in\N$ the following statements hold.
\begin{enumerate}
\item $\mu^{[l]}(Y_{l}) = 1$ and for every $y\in Y_{l}$ we have $m_{y}(Y_{l})=1$.
\item\label{it:disintegration} For every $y\in Y_{l}$ the measure $\m_{y}$ is ergodic and one has
\begin{equation}
\label{eq:motimesm}
\m_{y}\otimes\m_{y} = \int_{Y_{l+1}} \m_{x} \dif(\m_{y}\otimes\m_{y})(x).
\end{equation}
\item\label{it:generic}
$Y_{l} \subset (\tilde X)^{[l]}$, where $\tilde X\subset X$ is the set of points that are generic for each $f\in D$ w.r.t.\ $\mu$.
\item\label{it:orth} For every $y\in Y_{l}$, every $k\in\N$ and any functions $f_{\epsilon}\in D$, $\epsilon\in\{0,1\}^{l}$, such that $f_{\epsilon}\perp\HKZ_{k+l}(X)$ for some $\epsilon$ we have $f^{[l]} \perp \HKZ_{k}(X^{[l]},\m_{y})$.
\end{enumerate}
\end{lemma}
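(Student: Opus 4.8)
The four assertions are of decreasing difficulty, and the natural strategy is to build the sets $Y_l$ by a single recursive construction that throws away a countable union of null sets at each level, invoking separability of $C(X)$ (hence of $D$, by density) to make all the "for every $f$" clauses hold on one common full-measure set. The base case $Y_0$ is just $\tilde X$ from (\ref{it:generic}), which has full measure by the Lindenstrauss pointwise ergodic theorem since $D$ is countable; item (1) at level $0$ is the assertion $\m_x(\tilde X)=1$ for $\mu$-a.e.\ $x$, which follows from $\mu=\int\m_x\,\dif\mu(x)$ in \eqref{eq:m-disint} and Fubini. For the inductive step I would assume $Y_l$ has been constructed with $\mu^{[l]}(Y_l)=1$ and $\m_y$ ergodic on $X^{[l]}$ for $y\in Y_l$ (the discussion around \eqref{eq:my-mx}, applied on $X^{[l]}$ rather than $X$, is exactly what gives ergodicity of $\m_y$ for $\mu^{[l]}$-a.e.\ $y$), and then define $Y_{l+1}$ as an intersection of several full-measure sets described below.

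\textbf{Items (1) and (\ref{it:disintegration}).} The key identity \eqref{eq:motimesm} is just the ergodic decomposition \eqref{eq:m-disint} applied to the measure $\m_y\otimes\m_y$ on $X^{[l+1]}$: by \eqref{eq:mul+1-conventional} one has $\mu^{[l+1]}=\int_{X^{[l]}}\m_x\otimes\m_x\,\dif\mu^{[l]}(x)$, so the ergodic decomposition of $\mu^{[l+1]}$, restricted via the disintegration over $Y_l$, fibers into the ergodic decompositions of the individual $\m_x\otimes\m_x$; concretely, for $\mu^{[l]}$-a.e.\ $y$ one has $(\m_y\otimes\m_y)$-a.e.\ point $x$ lying in $Y_{l+1}$ and generic for $\m_x$ on $X^{[l+1]}$, which is \eqref{eq:motimesm}. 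One has to also arrange $\m_z(Y_{l+1})=1$ for $z\in Y_{l+1}$, which is the same $\mu=\int\m_x\,\dif\mu(x)$ argument one level up. The ergodicity of $\m_y$ on $X^{[l+1]}$ for $y\in Y_{l+1}$ is secured by intersecting with the full-measure set on which the computation following \eqref{eq:my-mx} (done on $X^{[l+1]}$, with $C(X^{[l+1]})$ separable) yields the vanishing of the relevant $L^2$ expression; all these are countably many conditions and can be imposed simultaneously.

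\textbf{Item (\ref{it:generic}).} This says $Y_l\subset(\tilde X)^{[l]}$, i.e.\ every coordinate of every point of $Y_l$ is $\mu$-generic for each $f\in D$. The point is that the projections $X^{[l]}\to X$ onto each of the $2^l$ coordinates push $\mu^{[l]}$ forward to $\mu$ (an easy induction from \eqref{eq:mul+1}, since $\delta_x\otimes\m_x$ projects appropriately and $\int\m_x\,\dif\mu^{[l]}(x)$ has the right marginals), so $(\tilde X)^{[l]}$, being the preimage of $\tilde X^{2^l}$, has full $\mu^{[l]}$-measure; intersect $Y_l$ with it. Propagating this down through the ergodic decomposition ($\m_y$ for $y\in Y_l$ is also supported on $(\tilde X)^{[l]}$) uses item (1).

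\textbf{Item (\ref{it:orth}), the main obstacle.} This is the substantive assertion: if some $f_\epsilon\perp\HKZ_{k+l}(X)$, then $f^{[l]}=\otimes_\epsilon f_\epsilon\perp\HKZ_k(X^{[l]},\m_y)$ for $y\in Y_l$. By the defining relation for the $\HKZ$ factors this is equivalent to $\|f^{[l]}\|_{U^{k+1}(X^{[l]},\m_y)}=0$. I would prove this by relating the $U^{k+1}$-seminorm of the tensor product $f^{[l]}$ over the cube space $(X^{[l]},\m_y)$ to the $U^{k+l+1}$-seminorm of the individual $f_\epsilon$ over $(X,\mu)$. The mechanism: the cube space $(X^{[l]})^{[k]}$ with measure $(\m_y)^{[k]}$ can be identified, up to a coordinate permutation, with a sub-cube-structure of $X^{[k+l]}$ with the measure $\mu^{[k+l]}$ — this is the standard "cubes of cubes" identification in Host–Kra theory (e.g.\ the nesting $\mu^{[l]}$-type relations), and under it $\otimes_{\eta\in\{0,1\}^{k}}(f^{[l]})$ becomes, coordinatewise, a tensor product in which each $f_\epsilon$ appears $2^k$ times. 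Integrating and applying the Cauchy–Schwarz–Gowers inequality \eqref{eq:CSG} at level $k+l$ bounds $\|f^{[l]}\|_{U^{k+1}(\m_y)}^{2^{k+1}}$ by a product of $\|f_\epsilon\|_{U^{k+l+1}(\mu)}$'s; since one factor $f_\epsilon$ satisfies $\|f_\epsilon\|_{U^{k+l+1}(\mu)}=0$, we are done — but for $\mu^{[l]}$-a.e.\ $y$, so again we intersect $Y_l$ with a countable intersection of the resulting null-complement sets (one for each finite tuple $(f_\epsilon)$ from the countable set $D$ and each $k\in\N$). The delicate points are: getting the "cubes of cubes" identification stated correctly with the right measure (the iterated ergodic-decomposition definition \eqref{eq:mul+1} makes this cleaner than the conventional one, since $\m_y$ for $y\in Y_l$ is exactly the piece that reassembles into $\mu^{[l]}$), and verifying that the seminorm comparison holds not just in $\mu^{[l]}$-average over $y$ but pointwise on a full-measure set — which is where having already fixed $Y_l$ with $\m_y$ ergodic and with the disintegration identity \eqref{eq:motimesm} available pays off, since these let one pass from the averaged identity to the pointwise one by the usual separability argument.
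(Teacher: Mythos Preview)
Your overall strategy matches the paper's exactly: items (\ref{it:generic}) and (\ref{it:orth}) hold on full-measure sets by the pointwise ergodic theorem and the Cauchy--Schwarz--Gowers inequality respectively, ergodicity of $\m_y$ is the computation around \eqref{eq:my-mx}, and \eqref{eq:motimesm} is the ergodic decomposition of $\m_y\otimes\m_y$. Your treatment of item~(\ref{it:orth}) is in fact more explicit than the paper's one-line appeal to \eqref{eq:CSG}, and is correct.

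There is, however, a genuine gap in your construction, and it is precisely the point the paper singles out as ``the only delicate point''. You propose to build the $Y_l$ by induction on $l$: assume $Y_l$ is constructed, then define $Y_{l+1}$. But the identity \eqref{eq:motimesm} at level $l$ is a statement about \emph{every} $y\in Y_l$, and it involves integration over $Y_{l+1}$; more to the point, it requires $(\m_y\otimes\m_y)(Y_{l+1})=1$. Once $Y_{l+1}$ is fixed, this only holds for $\mu^{[l]}$-a.e.\ $y$, not necessarily for every $y$ in the $Y_l$ you already committed to. Shrinking $Y_l$ to repair this then spoils the analogous condition one level down (for $y\in Y_{l-1}$ you need $(\m_y\otimes\m_y)(Y_l)=1$), and the same issue recurs for the self-referential clause $\m_y(Y_l)=1$ in item~(1). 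A straight induction on $l$ cannot close this loop.

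The paper resolves this by a fixed-point iteration: first pick tentative sets $Y_l$ for all $l$ simultaneously, satisfying everything except \eqref{eq:motimesm}; then, for each $l$, intersect $Y_l$ with the full-measure set on which \eqref{eq:motimesm} (relative to the current $Y_{l+1}$) holds; repeat. This produces, for each $l$, a decreasing sequence of full-measure sets whose intersection is the desired $Y_l$. Your sketch would be complete once you replace the inductive construction by this iterative one.
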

\begin{proof}
The fact that (\ref{it:orth}) holds for full measure subsets of $X^{[l]}$ follows from the Cauchy-Schwarz-Gowers inequality~\eqref{eq:CSG}.
The sets $(\tilde X)^{[l]}\subset X^{[l]}$ have full measure by the pointwise ergodic theorem and the definition \eqref{eq:mul+1} of cube measures, taking care of (\ref{it:orth}).
Also, the measure $\m_{y}$ is ergodic for $\mu^{[l]}$-a.e.\ $y\in X^{[l]}$, taking care of the first part of (\ref{it:disintegration}).

The only delicate point is \eqref{eq:motimesm}.
By \eqref{eq:m-disint} and \eqref{eq:mul+1-conventional}, for a fixed full measure domain of integration this disintegration identity holds for $\mu^{[l]}$-a.e.\ $y\in X^{[l]}$.
However, the domain of integration is yet to be determined.
This is done by a fixed-point procedure: choose tentative sets $Y_{l} \subset X^{[l]}$ that satisfy all conditions but \eqref{eq:motimesm} for every $l$.
For every $l$ this gives a $\mu^{[l]}$-full measure subset of $y\in X^{[l]}$ for which \eqref{eq:motimesm} holds.
The intersection of this set with $Y_{l}$ gives a new tentative set $Y_{l}$.
This way for each $l$ we obtain a decreasing sequence of tentative full measure subsets of $X^{[l]}$ whose intersection still has full measure and satisfies all requested properties.
\end{proof}

\subsection{Bourgain-Furstenberg-Katznelson-Ornstein criterion}
Our main tool for proving convergence u.a.e.\ is the following criterion that reduces the search for a universal set of $x\in X$ (that a priori involves uncountably many systems $Y$) to a problem about $X^{2}$.
This is the step that necessitates the dependence of the universal sets in Definition~\ref{def:uae} on preceding systems.
\begin{proposition}
\label{prop:BFKO}
Let $(X,T)$ be an ergodic measure-preserving system and $f\in L^{\infty}(X) \cap \HKZ_{1}(X)^{\perp}$.
Assume that $x\in X$ is fully generic for $f$ and
\[
\aveFN f(T^{n}x) f(T^{n}\xi)\to 0
\quad\text{ for a.e. }\xi\in X.
\]
Then for every measure-preserving system $(Y,S)$ and $g\in L^{\infty}(Y)$ we have
\[
\aveFN f(T^{n}x)g(S^{n}y)\to 0
\quad\text{ for a.e. }y\in Y.
\]
\end{proposition}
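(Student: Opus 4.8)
The plan is to decompose the test function $g$ on $Y$ into its Kronecker part and the orthogonal complement, handling the former by the Wiener--Wintner theorem for nilsequences and the latter --- the real difficulty --- by a van der Corput argument driven by the self-correlation hypothesis; along the way I reduce to a uniquely ergodic model of $(Y,S)$ with $g$ continuous.

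\emph{Preliminary reductions.} By the ergodic decomposition of $(Y,S)$ and a routine measurability argument it suffices to treat ergodic $(Y,S)$. Since $\sup_{N}\aveFN|g|(S^{n}\cdot)$ is dominated by a maximal function for the averages along $(F_N)$, which is of weak type $(1,1)$ by Lindenstrauss, it suffices after approximation in $L^{1}(Y)$ to treat $g$ in a countable $L^{\infty}$-dense family; in particular we may pass to a uniquely ergodic model of $(Y,S)$ via the Jewett--Krieger theorem and take $g\in C(Y)$. Because $\HKZ_{1}(X)$ contains the constants and $f\perp\HKZ_{1}(X)$, we have $\int f\,\dif\mu=0$, hence $\aveFN f(T^{n}x)\to0$; replacing $g$ by $g-\int g\,\dif\nu_Y$ thus changes the averages only by a null sequence, so we may assume $\int g\,\dif\nu_Y=0$, and then $\aveFN g(S^{n}y)\to0$ uniformly in $y$ by unique ergodicity. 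Writing $a_{n}:=f(T^{n}x)$ and $g=g_{\mathrm{ap}}+g_{\mathrm{wm}}$, where $g_{\mathrm{ap}}$ is the orthogonal projection of $g$ onto $L^{2}(\HKZ_{1}(Y))$ and $g_{\mathrm{wm}}:=g-g_{\mathrm{ap}}\perp\HKZ_{1}(Y)$, the maximal inequality lets me treat the two summands separately.

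\emph{The Kronecker part.} Approximate $g_{\mathrm{ap}}$ in $L^{2}(Y)$ by a finite linear combination $\sum_{j}c_{j}\phi_{j}$ of eigenfunctions, $S\phi_{j}=\lambda_{j}\phi_{j}$. For each eigenfunction the weighted average equals $\phi_{j}(y)\aveFN a_{n}\lambda_{j}^{n}$, and $(\lambda_{j}^{n})_{n}$ is a $1$-step nilsequence; since $f\perp\HKZ_{1}(X)$ and $x$ is fully generic for $f$, Theorem~\ref{thm:WW} yields $\aveFN a_{n}\lambda_{j}^{n}\to0$. Thus $\aveFN a_{n}g_{\mathrm{ap}}(S^{n}y)\to0$ for every $y$ up to an $L^{2}(Y)$-error, which is negligible for a.e.\ $y$ by the maximal inequality; this summand converges to $0$ a.e.

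\emph{The weakly mixing part and the main obstacle.} Lemma~\ref{lem:Z1-char-weight} already gives $\aveFN a_{n}S^{n}g_{\mathrm{wm}}\to0$ in $L^{2}(Y)$ --- using only boundedness of $(a_{n})$ and $g_{\mathrm{wm}}\perp\HKZ_{1}(Y)$ --- so a subsequence converges to $0$ a.e.; the whole point of the self-correlation hypothesis is to upgrade this to a.e.\ convergence of the full sequence, and this is the crux of the proof. I would argue by contradiction: if $\limsup_{N}|\aveFN a_{n}g_{\mathrm{wm}}(S^{n}y)|$ --- which is $S$-invariant by the F\o{}lner property --- equals a positive constant $c$ a.e., then, choosing for a.e.\ $y$ a stopping time $N(y)$ realizing this $\limsup$ and passing to a subsequential weak-$*$ limit of the empirical measures $\aveFN\delta_{(T^{n}x,\,S^{n}y)}$ along $N=N(y)$, one obtains a joining $\Lambda_{y}$ of $(X,\mu,T)$ with $(Y,\nu_Y,S)$ --- the $X$-marginal being $\mu$ after replacing $X$ by the topological factor generated by $f$ on which $x$ is fully generic, the $Y$-marginal being $\nu_Y$ by unique ergodicity --- with $\bigl|\int f\otimes g_{\mathrm{wm}}\,\dif\Lambda_{y}\bigr|\ge c/2$. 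A van der Corput / second-moment computation, in which one uses that each product $f\cdot\overline{f\circ T^{-h}}$ lies in the closed $T$-invariant algebra generated by $f$, so that its orbital averages along $x$ converge to $\int f\cdot\overline{f\circ T^{-h}}\,\dif\mu$, then reduces the matter to the Ces\`aro-averaged self-correlations of $f\otimes\overline{f}$ over the relatively independent self-joining $\theta_{y}$ of $\Lambda_{y}$ over $Y$; the hypothesis $\aveFN f(T^{n}x)f(T^{n}\xi)\to0$ for a.e.\ $\xi$, applied to all integer translates $T^{j}\xi$ and combined with the mean ergodic theorem, forces the $(T\times T)$-invariant component of $f\otimes\overline{f}$ relative to $\theta_{y}$ to vanish, which contradicts $c>0$. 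The reason the hypothesis is unavoidable here is that without it $\theta_{y}$ could be the diagonal self-joining, for which this component equals $\|f\|_{2}^{2}\ne0$; excluding such non-generic self-joinings, and carrying out the stopping-time and maximal book-keeping in the variable $y$ that converts the $L^{2}$-estimate into an a.e.\ estimate, is the heart of the Bourgain--Furstenberg--Katznelson--Ornstein argument and is the step I expect to be most delicate.
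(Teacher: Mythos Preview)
The paper does not prove Proposition~\ref{prop:BFKO} at all: it is quoted as a black box, attributed to Bourgain--Furstenberg--Katznelson--Ornstein for standard Ces\`aro averages, with the extension to tempered F\o{}lner sequences credited to Ornstein--Weiss and Lindenstrauss (via the random covering lemma), and a reference to \cite{arxiv:1301.1884} for details. So there is no ``paper's own proof'' to compare with; any comparison must be to the cited literature.

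Regarding your attempt: the reductions and the Kronecker part are fine but superfluous. Your argument for the ``weakly mixing'' summand $g_{\mathrm{wm}}$ never uses $g_{\mathrm{wm}}\perp\HKZ_{1}(Y)$, so if it worked it would already handle all of $g$; the splitting $g=g_{\mathrm{ap}}+g_{\mathrm{wm}}$ buys nothing. The genuine difficulty is entirely in that last part, and your sketch has a real gap there. You produce a joining $\Lambda_{y}$ of $(X,\mu)$ with $(Y,\nu_{Y})$ and its relatively independent self-joining $\theta_{y}$ over $Y$, and then assert that the hypothesis ``$\aveFN f(T^{n}x)f(T^{n}\xi)\to 0$ for $\mu$-a.e.\ $\xi$'' forces the $(T\times T)$-invariant part of $f\otimes\overline f$ to vanish with respect to $\theta_{y}$. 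But the hypothesis is a $\mu$-a.e.\ statement in $\xi$, whereas the $X\times X$-marginal of $\theta_{y}$ is an arbitrary self-joining of $\mu$ determined by the $Y$-data; there is no a priori absolute continuity, and ``applying the hypothesis to all integer translates $T^{j}\xi$'' does not manufacture one. Without an additional mechanism the implication simply fails.

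What is missing is precisely the combinatorial transfer that gives the result its name. The BFKO argument (and its Ornstein--Weiss/Lindenstrauss amenable version) does not go through joinings: one assumes a positive-measure set of bad $y$'s, assigns to each a stopping time $N(y)$, and uses a covering lemma (Vitali-type for $\Z$ with the standard F\o{}lner sequence, the Lindenstrauss random covering lemma in the tempered case) to tile most of $Y$ by essentially disjoint orbit segments $\{y,Sy,\dots,S^{N(y)-1}y\}$. On each segment one replaces the values $g(S^{j}y)$ by suitable values $\pm f(T^{j+m}x)$ drawn from the orbit of $x$; full genericity of $x$ guarantees that the coded function approximates $g$ well in $L^{2}(Y)$, while the resulting ``$X$-side'' averages are exactly of the form $\aveFN f(T^{n}x)f(T^{n}\xi)$ for $\xi$ ranging over a set to which the $\mu$-a.e.\ hypothesis applies. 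This coding step --- turning a pointwise statement in $Y$ into a $\mu$-a.e.\ statement in $X$ --- is the heart of the matter, and it is absent from your outline. Your instinct that this is ``the step most delicate'' is correct; but the joining/van der Corput route you propose does not supply it.
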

Proposition~\ref{prop:BFKO} is due to Bourgain, Furstenberg, Katznelson and Ornstein in the case of the standard Ces\`aro averages \cite{MR1557098}*{Proposition}.
Their proof has been extended to F\o{}lner sequences in countable amenable groups satisfying the Tempelman condition by Ornstein and Weiss \cite{MR1195256}*{\textsection 3}.
Lindenstrauss \cite{MR1865397} observed that the result for tempered F\o{}lner sequences follows by methods of Ornstein and Weiss from his random covering lemma, for a detailed proof see \cite{arxiv:1301.1884}.

\subsection{A measure-theoretic lemma}
The next lemma is our main tool for dealing with cube measures.
Informally, it shows that a certain kind of universality for $\mu^{[1]}\otimes\nu^{[1]}$ implies some universality for $(\mu\times\nu)^{[1]}$.

Recall that, for ergodic measure-preserving systems $(X,\mu),(Y,\nu)$, the projection onto the invariant factor of $X\times Y$ has the form $\phi(x,y)=\psi(\pi_{1}(x),\pi_{1}(y))$, where $\pi_{1}$ are projections onto the Kronecker factors and $\psi$ is the quotient map of $\HKZ_{1}(X)\times\HKZ_{1}(Y)$ by the orbit closure of the identity.
To see this, recall that by Lemma~\ref{lem:Z1-char-weight} the function $f\otimes g$, $f\in L^{\infty}(X)$, $g\in L^{\infty}(Y)$, is orthogonal to the invariant factor of $X\times Y$ whenever $f\perp\HKZ_{1}(X)$ or $g\perp\HKZ_{1}(Y)$.
Thus the invariant sub-$\sigma$-algebra on $X\times Y$ is contained in $\HKZ_{1}(X)\times\HKZ_{1}(Y)$, i.e.\ it is (isomorphic to) the invariant sub-$\sigma$-algebra of a product of two compact group rotations (cf.\ e.g.\ \cite{MR1325712}*{Theorem 1.9}).
In particular, for an ergodic system $Y$ the invariant factor of $Y\times Y$ is isomorphic to $\HKZ_{1}(Y)$.
\begin{lemma}
\label{lem:cube-change-order}
Let $(X,\mu),(Y,\nu)$ be ergodic measure-preserving systems and fix measure disintegrations
\[
\mu = \int_{\kappa\in\HKZ_{1}(X)} \mu_{\kappa} \dif\kappa,
\quad
\nu = \int_{\lambda\in\HKZ_{1}(Y)} \mu_{\lambda} \dif\lambda.
\]
This induces an ergodic decomposition
\[
\nu\otimes\nu = \int_{\lambda\in\HKZ_{1}(Y)} (\nu\otimes\nu)_{\lambda} \dif\lambda,
\quad
(\nu\otimes\nu)_{\lambda} = \int_{\lambda'\in\HKZ_{1}(Y)} \nu_{\lambda'}\otimes\nu_{\lambda' \lambda\inv} \dif\lambda'.
\]
Let $x\in X$ and $\Lambda\subset\HKZ_{1}(Y)$ be a full measure set.
Assume that for $\mu$-a.e. $\xi$ and every $\lambda\in\Lambda$, for $(\nu\otimes\nu)_{\lambda}$-a.e. $(\eta,\eta')$ one some statement $P(x,\xi,\eta,\eta')$ holds.
Then $P(x,\xi,y,\eta)$ also holds for $\nu$-a.e. $y$ and $\tilde\m_{x,y}$-a.e. $(\xi,\eta)$, where
\[
\tilde\m_{x,y} =
\int_{\kappa\in\HKZ_{1}(X),\lambda\in\HKZ_{1}(Y):\psi(\pi_{1}(x),\pi_{1}(y))=\psi(\kappa,\lambda)} \mu_{\kappa}\otimes\nu_{\lambda} \dif(\kappa,\lambda),
\]
the homomorphism $\psi$ is as above and the integral is taken over an affine subgroup (i.e.\ a coset of a closed subgroup) with respect to its Haar measure.
\end{lemma}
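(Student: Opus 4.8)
The plan is to reduce the asserted conclusion to the vanishing of a single nonnegative integral over $X\times Y$ and then to evaluate that integral by a chain of applications of Tonelli's theorem (this is the promised ``change of order''), at the end of which the hypothesis applies directly.

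\emph{Setup.} I would fix topological models so that the Kronecker maps $\pi_{1}\colon X\to Z:=\HKZ_{1}(X)$ and $\pi_{1}\colon Y\to W:=\HKZ_{1}(Y)$ are defined everywhere, writing $\alpha\in Z$, $\beta\in W$ for the rotation elements. By the discussion preceding the lemma, the invariant factor of $X\times Y$ is the pullback of $(Z\times W)/H$ under $\psi\circ(\pi_{1}\times\pi_{1})$, where $H=\overline{\{(n\alpha,n\beta):n\in\Z\}}$, so that $\tilde\m_{x,y}$ is the average of $\mu_{\kappa}\otimes\nu_{\lambda}$ over the coset $(\pi_{1}x,\pi_{1}y)H$ with respect to its Haar measure. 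The one structural fact I would record at the outset is that both coordinate projections $H\to Z$ and $H\to W$ are onto -- immediate from ergodicity of the two rotations, since $\overline{\{n\alpha:n\in\Z\}}=Z$ and $\overline{\{n\beta:n\in\Z\}}=W$ -- so that the pushforward of Haar measure on $H$, or on any coset $gH$, to either factor is the respective Haar measure. Finally let $G\subset X$ be the $\mu$-conull set of ``good'' $\xi$ provided by the hypothesis; then $\mu_{\kappa}(G)=1$ for Haar-a.e.\ $\kappa\in Z$.

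\emph{The computation.} Put $B:=\{(\xi,\eta,\eta'):\text{$P(x,\xi,\eta,\eta')$ fails}\}\subset X\times Y\times Y$, measurable by hypothesis, and let $B_{\xi}\subset Y\times Y$ be its $\xi$-section. Matching the variables $y,\eta$ of the conclusion with the last two slots of $P$, the desired statement is precisely that
\[
I:=\int_{Y}\tilde\m_{x,y}\bigl(\{(\xi,\eta):(\xi,y,\eta)\in B\}\bigr)\,\dif\nu(y)=0 .
\]
Expanding $\tilde\m_{x,y}$, writing $\nu=\int_{W}\nu_{w}\,\dif w$, parametrizing the coset $(\pi_{1}x,w)H$ by $H$ via $(\kappa,\lambda)=(\pi_{1}x\cdot h_{1},\,w\cdot h_{2})$, and moving nonnegative integrals around by Tonelli, I would arrive at
\[
I=\int_{H}\int_{X}\Bigl(\int_{W}(\nu_{w}\otimes\nu_{w h_{2}})(B_{\xi})\,\dif w\Bigr)\dif\mu_{\pi_{1}x\cdot h_{1}}(\xi)\,\dif\mathrm{Haar}_{H}(h_{1},h_{2}),
\]
where in $\nu_{w}\otimes\nu_{w h_{2}}$ the first factor occupies the slot of $B_{\xi}$ filled by the variable $y$. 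The key observation is that the inner $w$-integral is exactly an ergodic component of $\nu\otimes\nu$: the pair of fibres $(\nu_{w},\nu_{w h_{2}})$ has constant ``Kronecker difference'' $h_{2}\inv$ independent of $w$, so the disintegration $(\nu\otimes\nu)_{\lambda}=\int_{W}\nu_{\lambda'}\otimes\nu_{\lambda'\lambda\inv}\,\dif\lambda'$ from the statement gives $\int_{W}(\nu_{w}\otimes\nu_{w h_{2}})(B_{\xi})\,\dif w=(\nu\otimes\nu)_{h_{2}\inv}(B_{\xi})$. Hence $I=\int_{H}\int_{X}(\nu\otimes\nu)_{h_{2}\inv}(B_{\xi})\,\dif\mu_{\pi_{1}x\cdot h_{1}}(\xi)\,\dif\mathrm{Haar}_{H}(h_{1},h_{2})$.

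\emph{Conclusion and the hard part.} By the surjectivity recorded above, for $\mathrm{Haar}_{H}$-a.e.\ $(h_{1},h_{2})$ one has both $h_{2}\inv\in\Lambda$ and $\mu_{\pi_{1}x\cdot h_{1}}(G)=1$; for such $(h_{1},h_{2})$ and every $\xi\in G$ the hypothesis forces $(\nu\otimes\nu)_{h_{2}\inv}(B_{\xi})=0$, so the inner integral vanishes and therefore $I=0$. I expect the genuine mathematical content to be light and the difficulty to be organizational: keeping straight which of the three copies of $Y$ sits in which slot of $P$, justifying each Tonelli interchange together with the change of variables along the affine subgroup $H$ and its cosets, and committing to the reading of the hypothesis in which the $\mu$-conull ``good'' set $G$ is independent of $\lambda$ -- the reading that holds in the applications and the one needed here, since $\mu_{\pi_{1}x\cdot h_{1}}$ is typically singular to $\mu$.
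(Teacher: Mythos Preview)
Your argument is correct and follows essentially the same route as the paper's proof. Both hinge on the surjectivity of the coordinate projections of $H=\ker\psi$ and on the same Tonelli reorganization that turns the $\nu$-average of $\tilde\m_{x,y}$ into a $\mu$-average against ergodic components $(\nu\otimes\nu)_{\lambda}$; the paper records this as a chain of measure identities ending at $\int_{Y}\delta_{y}\otimes\tilde\m_{x,y}\,\dif\nu(y)$, whereas you package the same computation as the vanishing of the bad-set integral $I$, but the mathematical content is identical.
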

\begin{proof}
Recall that $\ker\psi$ has full projections on both coordinates.
Therefore, for \emph{every} $x$ there is a full measure set of $\xi$ such that the set $\Lambda$ has full measure in $\{\lambda : \psi(\pi_{1}(x)\pi_{1}(\xi)\inv,\lambda)=\id\}$ (note that this is a closed affine subgroup of $\HKZ_{1}(Y)$ that therefore has a Haar measure).

In particular, for a full measure set of $\xi$ (that depends on $Y$) the hypothesis holds for a.e. $\lambda$ with $\psi(\pi_{1}(x)\pi_{1}(\xi)\inv,\lambda)=\id$, i.e.\ we have $P(x,\cdot)$ for a set of full measure w.r.t.\ the measure
\begin{multline*}
\int_{\xi\in X} \delta_{\xi}\otimes \int_{{\lambda\in\HKZ_{1}(Y) : \psi(\pi_{1}(x)\pi_{1}(\xi)\inv,\lambda)=\id}}
(\nu\otimes\nu)_{\lambda} \dif\lambda \dif\mu(\xi)\\
=
\int_{\kappa\in\HKZ_{1}(X)} \int_{\lambda\in\HKZ_{1}(Y):\psi(\pi_{1}(x)\kappa\inv,\lambda)=\id} \mu_{\kappa}\otimes
(\nu\otimes\nu)_{\lambda} \dif\lambda \dif\kappa\\
=
\int_{\kappa\in\HKZ_{1}(X)} \int_{\lambda\in\HKZ_{1}(Y):\psi(\pi_{1}(x)\kappa\inv,\lambda)=\id} \mu_{\kappa}\otimes
\int_{\lambda'\in\HKZ_{1}(Y)} \nu_{\lambda'}\otimes\nu_{\lambda'\lambda\inv} \dif\lambda' \dif\lambda \dif\kappa\\
=
\int_{\kappa\in\HKZ_{1}(X)} \int_{\lambda\in\HKZ_{1}(Y):\psi(\pi_{1}(x)\kappa\inv,\lambda)=\id} \mu_{\kappa}\otimes
\int_{y\in Y} \delta_{y}\otimes\nu_{\pi(y)\lambda\inv} \dif\nu(y) \dif\lambda \dif\kappa\\
=
\int_{y\in Y}
\int_{\kappa\in\HKZ_{1}(X)} \int_{\lambda\in\HKZ_{1}(Y):\psi(\pi_{1}(x)\kappa\inv,\lambda)=\id} \mu_{\kappa}\otimes
\delta_{y}\otimes\nu_{\pi(y)\lambda\inv} \dif\lambda \dif\kappa \dif\nu(y)\\
=
\int_{y\in Y} \int_{\kappa\in\HKZ_{1}(X),\lambda\in\HKZ_{1}(Y):\psi(\pi_{1}(x),\pi_{1}(y))=\psi(\kappa,\lambda)} \mu_{\kappa}
\otimes \delta_{y}\otimes\nu_{\lambda} \dif(\kappa,\lambda) \dif\nu(y)\\
=
\int_{y\in Y} \delta_{y}\otimes \tilde\m_{x,y} \dif\nu(y).
\end{multline*}
This gives $P(x,\xi,y,\eta)$ for $\nu$-a.e. $y$ and $\tilde\m_{x,y}$-a.e. pair $(\xi,\eta)$ as required.
\end{proof}
The next lemma provides us with means for using the measure $\tilde\m_{x,y}$ in a higher step setting.
\begin{lemma}
\label{lem:prod-fiber-erg}
Let $(Z,g),(Z',g')$ be ergodic nilsystems and $\psi:\HKZ_{1}(Z)\times\HKZ_{1}(Z') \to H$ the factor map modulo the orbit closure of $(\pi_{1}(g),\pi_{1}(g'))$.
Then for every $\lambda\in\HKZ_{1}(Z)$ and a.e.\ $\lambda'\in\HKZ_{1}(Z')$ the rotation by $(g,g')$ on the nilmanifold
\[
N_{\lambda,\lambda'} = \{(z,z')\in Z\times Z' : \psi(\pi_{1}(z),\pi_{1}(z')) = \psi(\lambda,\lambda')\}
\]
is uniquely ergodic.
\end{lemma}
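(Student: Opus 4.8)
The plan is to recognize $N_{\lambda,\lambda'}$ as a fiber of the invariant factor of the product nilsystem $(Z\times Z',(g,g'))$ and then to invoke the classical fact that an ergodic nilsystem is uniquely ergodic. Writing $Z=G/\Gamma$ and $Z'=G'/\Gamma'$, the product $(Z\times Z',(g,g'))$ is again a nilsystem, on $(G\times G')/(\Gamma\times\Gamma')$. By the discussion preceding Lemma~\ref{lem:cube-change-order}, its invariant $\sigma$-algebra coincides with the invariant $\sigma$-algebra of the product of the two Kronecker rotations $\HKZ_1(Z)\times\HKZ_1(Z')$, that is, with the $\sigma$-algebra of functions that are constant along the cosets of $K:=\overline{\langle(\pi_1(g),\pi_1(g'))\rangle}$. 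Hence the invariant factor of $Z\times Z'$ is precisely $H$, with factor map $\theta:=\psi\circ(\pi_1\times\pi_1)\colon Z\times Z'\to H$. Since conditioning on the invariant factor realizes the ergodic decomposition, in the disintegration $m_Z\otimes m_{Z'}=\int_H(m_Z\otimes m_{Z'})_h\,\dif m_H(h)$, where $m_H$ denotes the (Haar) image measure $\theta_*(m_Z\otimes m_{Z'})$, the measure $(m_Z\otimes m_{Z'})_h$ is concentrated on $N_h:=\theta\inv(h)$ and is ergodic for $m_H$-a.e.\ $h$.

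I would then identify these conditional measures with Haar measures on the fibers. The map $\pi_1\times\pi_1\colon Z\times Z'\to\HKZ_1(Z)\times\HKZ_1(Z')$ is the homogeneous-space factor map induced by the quotient homomorphism $G\times G'\to(G/G_2)\times(G'/G_2')$, and $\psi$ is the quotient of the compact abelian Lie group $\HKZ_1(Z)\times\HKZ_1(Z')$ by the closed---hence rational---subgroup $K$; so $\theta$ is again a homogeneous-space factor map. Therefore, by Mal'cev's rationality theorem, every fiber $N_h$ is a sub-nilmanifold of $Z\times Z'$, and $(m_Z\otimes m_{Z'})_h$ is its Haar measure, so in particular $\mathrm{supp}\,(m_Z\otimes m_{Z'})_h=N_h$. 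Combining this with the previous paragraph, for $m_H$-a.e.\ $h$ the Haar measure of the nilmanifold $N_h$ is an ergodic $(g,g')$-invariant measure; thus the nilsystem $(N_h,(g,g'))$ is ergodic, hence uniquely ergodic.

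It remains to upgrade ``$m_H$-a.e.\ $h$'' to ``every $\lambda\in\HKZ_1(Z)$ and a.e.\ $\lambda'\in\HKZ_1(Z')$''. Let $B\subset H$ be the $m_H$-null set of those $h$ for which $(N_h,(g,g'))$ is not uniquely ergodic. Since $N_{\lambda,\lambda'}=N_{\psi(\lambda,\lambda')}$, it suffices to check that for each fixed $\lambda$ the set $\{\lambda':\psi(\lambda,\lambda')\in B\}$ is $m_{\HKZ_1(Z')}$-null. For fixed $\lambda$ the map $\lambda'\mapsto\psi(\lambda,\lambda')$ is a translate of the continuous homomorphism $\lambda'\mapsto\psi(0,\lambda')$, which is surjective because $K$ projects onto the first factor $\HKZ_1(Z)$ (ergodicity of $(Z,g)$ makes $\langle\pi_1(g)\rangle$ dense in $\HKZ_1(Z)$, so this projection is both dense and closed). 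Consequently $\lambda'\mapsto\psi(\lambda,\lambda')$ pushes $m_{\HKZ_1(Z')}$ forward to $m_H$, whence the preimage of $B$ is null.

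The one step I expect to need real care is the middle one: checking that the level sets $N_{\lambda,\lambda'}$ are honest sub-nilmanifolds carrying the ergodic-component measures $(m_Z\otimes m_{Z'})_h$ as their Haar measures---equivalently, that $\theta$ is a homogeneous-space factor map. This is the only place where the nilpotent (rather than merely ergodic measure-preserving) structure of $Z$ and $Z'$ is genuinely used, beyond the closing appeal to unique ergodicity, and it rests on the rationality of closed subgroups of compact abelian Lie groups together with Mal'cev's cocompactness theorem; the other steps are routine manipulations with ergodic decompositions and nilsystems.
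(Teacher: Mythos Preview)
Your proof is correct. It shares with the paper's argument the outer structure: reduce unique ergodicity to ergodicity (citing the standard fact for nilsystems, which the paper attributes to \cite{MR2122919}*{2.17--2.20}), and pass from ``$m_H$-a.e.\ $h$'' to ``every $\lambda$, a.e.\ $\lambda'$'' via the full projection of $\ker\psi$ onto the first coordinate. The difference is in how ergodicity of a.e.\ fiber is obtained. The paper checks it by hand via the Kronecker splitting $f=f_\perp+f_{\HKZ}$ of test functions: tensor products involving an $f_\perp$ have ergodic averages that vanish a.e.\ on $Z\times Z'$ (Lemma~\ref{lem:Z1-char-weight}), hence a.e.\ on a.e.\ fiber, while for $f_{\HKZ}\otimes f'_{\HKZ}$ the averages are constant on each $K$-coset because the rotation is ergodic there. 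You instead invoke the abstract ergodic decomposition---the conditional measures of $m_Z\otimes m_{Z'}$ over the invariant factor $H$ are a.e.\ ergodic---and then identify these with the fiber Haar measures. Your route is slightly more conceptual; the paper's stays within tools already set up in the article.

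One remark on the step you flagged: the appeal to ``Mal'cev's rationality theorem'' is unnecessary and somewhat misplaced. The map $G\times G'\to\HKZ_1(Z)\times\HKZ_1(Z')$ is a group homomorphism (since $G_2\Gamma$ is normal in $G$), and the preimage $\tilde K\subset G\times G'$ of $K$ already \emph{contains} $\Gamma\times\Gamma'$. Hence $N_h$ is a translate of $\tilde K/(\Gamma\times\Gamma')$, which is closed in the compact space $Z\times Z'$ and therefore automatically a compact nilmanifold---no rationality condition needs checking. The identification of the conditional measures with the fiber Haar measures is then just Weil's integration formula for the tower $\Gamma\times\Gamma'\subset\tilde K\trianglelefteq G\times G'$.
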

\begin{proof}
By \cite{MR2122919}*{2.17-2.20} it suffices to prove ergodicity to obtain unique ergodicity.

Since $N_{\lambda,\lambda'}$ only depends on $\psi(\lambda,\lambda')$ and $\ker\psi$ has full projection on $\HKZ_{1}(Z)$ it suffices to verify the conclusion for a full measure set of $(\lambda,\lambda')$.
For this end it suffices to check that for any $f\in C(Z),f'\in C(Z')$ the limit of the ergodic averages of $f\otimes f'$ is essentially constant on $N_{\lambda,\lambda'}$.
We decompose $f=f_{\perp}+f_{\HKZ}$ with $f_{\perp}\perp\HKZ_{1}(Z)$ and $f_{\HKZ}\in L^{\infty}(\HKZ_{1}(Z))$, and analogously for $f'$.
For $f_{\HKZ}\otimes f'_{\HKZ}$ the limit is essentially constant on $N_{\lambda,\lambda'}$ for any $(\lambda,\lambda')$ since the rotation is ergodic on $(\pi_{1}\times\pi_{1})(N_{\lambda,\lambda'})$.

On the other hand, the limit of the ergodic averages of tensor products involving $f_{\perp}$ vanishes on $Z\times Z'$ a.e.\ by Lemma~\ref{lem:Z1-char-weight}, hence also a.e.\ on a.e.\ fiber $N_{\lambda,\lambda'}$.
\end{proof}

\section{Return times theorem on cube spaces}
\label{sec:RTT-cube}
In order to concisely state our central result, Theorem~\ref{thm:RTT-cube}, we need a cube version of Definition~\ref{def:uae}.
Recall that we write $f_{i}^{[l]}=\otimes_{\epsilon\in\{0,1\}^{l}}f_{i,\epsilon}$, where $f_{i,\epsilon} \in L^{\infty}(X_{i})$.
\begin{definition}
\label{def:luae}
Let $P$ be a statement about ergodic regular measure-preserving systems $(X_{i},\mu_{i},T_{i})$, functions $f_{i}^{[l]}$ and points $x_{i}\in X_{i}^{[l]}$, $i=0,\dots,k$.
We say that $P$ holds for \emph{$[l]$-universally almost every ($[l]$-u.a.e.)} tuple $x_{0},\dots,x_{k}$ if
\begin{itemize}
\item[$(0)$] For every system $(X_{0},\mu_{0},T_{0},D_{0})$ there exists a measurable set $\tilde X_{0}^{[l]}\subset X_{0}^{[l]}$ such that for every $y_{0}\in Y_{0,l}$ we have $\m_{y_{0}}(\tilde X_{0}^{[l]})=1$ and
\item[$(1)$] for every system $(X_{1},\mu_{1},T_{1},D_{1})$ there exists a measurable set $\tilde X_{1}^{[l]}\subset X_{0}^{[l]}\times X_{1}^{[l]}$ such that for every $\vec x_{0}\in\tilde X_{0}^{[l]}$ and every $y_{1}\in Y_{1,l}$ we have $\m_{y_{1}}\{x_{1}: (\vec x_{0},x_{1})\in\tilde X_{1}\}=1$ and
\item[]\begin{center}$\vdots$\end{center}
\item[$(k)$] for every system $(X_{k},\mu_{k},T_{k},D_{k})$ there exists a measurable set $\tilde X_{k}^{[l]}\subset X_{0}^{[l]}\times\dots\times X_{k}^{[l]}$ such that for every $\vec x_{k-1}\in\tilde X_{k-1}^{[l]}$ and every $y_{k}\in Y_{k,l}$ we have $\m_{y_{k}}\{x_{k}: (\vec x_{k-1},x_{k})\in\tilde X_{k}\}=1$ and
\end{itemize}
we have $P(f_{0}^{[l]},\dots,f_{k}^{[l]},\vec x_{k})$ for every $\vec x_{k}\in\tilde X_{k}$ and any $f_{i,\epsilon}\in D_{i}$, $0\leq i\leq k$, $\epsilon\in\{0,1\}^{l}$.
\end{definition}
With this definition ``u.a.e.''  corresponds to ``$[0]$-u.a.e.''.

Our main theorem below states that certain pro-nilfactors are characteristic for return time averages on cube spaces.
\begin{theorem}
\label{thm:RTT-cube}
For any $k,l\in\N$ the ergodic averages of $\otimes_{i=0}^{k}f_{i}^{[l]}$ converge $[l]$-u.a.e.
If in addition
\begin{equation}
\makeatletter
\def\tagform@#1{\maketag@@@{\ignorespaces#1\unskip\@@italiccorr} (k,l)}
\makeatother
\tag{CF}
\label{eq:cf}
\exists\epsilon\in\{0,1\}^{l}
\text{ s.t.\ }
f_{0,\epsilon}\perp\HKZ_{k+l}(X_{0})
\text{ or }
f_{i,\epsilon}\perp\HKZ_{k+l+1-i}(X_{i})
\text{ for some }
1\leq i\leq k
\end{equation}
then the limit vanishes $[l]$-u.a.e.
\end{theorem}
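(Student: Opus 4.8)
The plan is to prove Theorem~\ref{thm:RTT-cube} by induction on $k$, keeping $l$ arbitrary throughout, and to carry out the step from $k$ to $k+1$ by splitting off the system $X_{0}$ and feeding its orbit into the Bourgain--Furstenberg--Katznelson--Ornstein criterion (Proposition~\ref{prop:BFKO}). The base case $k=0$ is immediate: for a system $(X_{0},\mu_{0},T_{0},D_{0})$ and $y_{0}\in Y_{0,l}$ the measure $\m_{y_{0}}$ is ergodic by Lemma~\ref{lem:Yl}(\ref{it:disintegration}), so by the Lindenstrauss pointwise ergodic theorem---with the exceptional sets chosen compatibly with $Y_{0,l}$ and independently of the countably many $f_{0,\epsilon}\in D_{0}$, as permitted by Lemma~\ref{lem:Yl}---a suitable $\m_{y_{0}}$-conull set $\tilde X_{0}^{[l]}$ consists of points fully generic for $f_{0}^{[l]}$, so the averages of $f_{0}^{[l]}$ converge to $\int f_{0}^{[l]}\dif\m_{y_{0}}$; and when $f_{0,\epsilon}\perp\HKZ_{l}(X_{0})$ for some $\epsilon$, Lemma~\ref{lem:Yl}(\ref{it:orth}) with parameter $0$ gives $\int f_{0}^{[l]}\dif\m_{y_{0}}=0$.

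For the inductive step, fix systems $X_{0},\dots,X_{k+1}$ and functions $f_{i,\epsilon}\in D_{i}$, and write $w_{n}(x_{0}):=f_{0}^{[l]}(T_{0}^{[l],n}x_{0})$ for the weight coming from $X_{0}$, with $T_{0}^{[l]}$ the product transformation on $X_{0}^{[l]}$. Decomposing each $f_{0,\epsilon}$ as in Definition~\ref{def:D} at level $(k+1)+l$ and using multilinearity together with the Lindenstrauss maximal ergodic inequality on $(X_{0}^{[l]},\m_{y_{0}})$ to render the error terms negligible (their $L^{1}$ contributions tend to $0$, in measure over $y_{0}$, by the defining properties of $D_{0}$), I would reduce to two cases: (A) every $f_{0,\epsilon}$ lies in $C(Z_{\epsilon})$ for a nilfactor $Z_{\epsilon}$ of $\HKZ_{(k+1)+l}(X_{0})$, so that $w_{n}(x_{0})$ is---for $x_{0}$ in a full-measure set---a product of basic nilsequences, hence a nilsequence; and (B) $f_{0,\epsilon_{0}}\perp\HKZ_{(k+1)+l}(X_{0})$ for some $\epsilon_{0}$.

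Case~(B) is the heart of the matter. Here Lemma~\ref{lem:Yl}(\ref{it:orth}) at level $l$ gives $f_{0}^{[l]}\perp\HKZ_{k+1}(X_{0}^{[l]},\m_{y_{0}})$, in particular $f_{0}^{[l]}\perp\HKZ_{1}(X_{0}^{[l]},\m_{y_{0}})$, so Proposition~\ref{prop:BFKO} becomes applicable on the ergodic system $(X_{0}^{[l]},\m_{y_{0}},T_{0}^{[l]})$ once one checks that $\aveFN w_{n}(x_{0})w_{n}(\xi)\to 0$ for $\m_{y_{0}}$-a.e.\ $\xi$. The point is that, identifying $X_{0}^{[l]}\times X_{0}^{[l]}$ with $X_{0}^{[l+1]}$ and disintegrating $\m_{y_{0}}\otimes\m_{y_{0}}$ over $Y_{l+1}$ by means of \eqref{eq:motimesm}, this self-correlation equals, for $(\m_{y_{0}}\otimes\m_{y_{0}})$-a.e.\ $x\in Y_{l+1}$, the ergodic average on $(X_{0}^{[l+1]},\m_{x})$ of the function $\otimes_{\epsilon\in\{0,1\}^{l+1}}f_{0,\pi(\epsilon)}$, where $\pi\colon\{0,1\}^{l+1}\to\{0,1\}^{l}$ forgets the first coordinate (in the complex case one conjugates on half the coordinates, which changes nothing below). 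Since $f_{0,\epsilon_{0}}\perp\HKZ_{(k+1)+l}(X_{0})=\HKZ_{k+(l+1)}(X_{0})$, Lemma~\ref{lem:Yl}(\ref{it:orth}) at level $l+1$ forces this function to be orthogonal to $\HKZ_{k}(X_{0}^{[l+1]},\m_{x})$, hence to $\HKZ_{0}$, hence to integrate to $0$ against $\m_{x}$, so by the Lindenstrauss theorem the ergodic averages tend to $0$ a.e. Proposition~\ref{prop:BFKO} therefore yields, for $[l]$-u.a.e.\ $x_{0}$, that $w_{n}(x_{0})$ is a universally good weight for convergence to zero.

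It then remains to upgrade the statements just obtained for $X_{0}$ into the nested universal sets of Definition~\ref{def:luae} for the full averages $\aveFN w_{n}(x_{0})\prod_{i=1}^{k+1}f_{i}^{[l]}(T_{i}^{[l],n}x_{i})$. The plan is to peel off $X_{1},\dots,X_{k+1}$ one at a time: at the $j$-th step one applies Proposition~\ref{prop:BFKO} to the product cube system $X_{0}^{[l]}\times\dots\times X_{j}^{[l]}$ equipped with an ergodic component of $\m_{y_{0}}\otimes\dots\otimes\m_{y_{j}}$, using Lemma~\ref{lem:cube-change-order} and Lemma~\ref{lem:prod-fiber-erg} (together with Lemma~\ref{lem:Z1-char-weight} and the Kronecker-factor description preceding Lemma~\ref{lem:cube-change-order}) to identify those ergodic components and to pass between the cube space of a product and the product of cube spaces, and reducing the self-correlation hypothesis at each step---again through $(\cdot)^{[l]}\times(\cdot)^{[l]}=(\cdot)^{[l+1]}$---to an instance of Lemma~\ref{lem:Yl}(\ref{it:orth}) at level $l+1$. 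Case~(A) is absorbed along the way: the nilsequence $w_{n}(x_{0})$ merges into an auxiliary nilsequence weight carried through the whole induction (this is also what will make the scheme yield Theorem~\ref{thm:WWRTT}), and both the convergence there and the vanishing when some $f_{i,\epsilon}$ with $1\le i\le k+1$ is orthogonal to the relevant Host--Kra factor follow from the inductive hypothesis together with the Wiener--Wintner theorem for nilsequences (Theorem~\ref{thm:WW}). I expect the main obstacle to be twofold. Conceptually, the gap in Case~(B) between the high Host--Kra orthogonality supplied by the hypothesis and the mere Kronecker orthogonality required by Proposition~\ref{prop:BFKO} must be bridged, which is precisely what climbing one cube level achieves. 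Technically---and this is where most of the work lies---one must propagate the $\m_{y_{i}}$-a.e.\ nesting of Definition~\ref{def:luae} faithfully through the successive applications of Proposition~\ref{prop:BFKO} on products of cube spaces, and it is exactly here that the measure-theoretic lemmas of Section~\ref{sec:tools} are needed and that the index shifts in the hypothesis of Theorem~\ref{thm:RTT-cube} get spent.
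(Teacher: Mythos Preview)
Your overall architecture—induction on $k$, Host--Kra decomposition, BFKO for the anti-uniform part, Wiener--Wintner for the structured part—matches the paper's. The structural difference is that you split off $X_{0}$ first and propose to apply BFKO once per system, whereas the paper splits off $X_{k+1}$ last and applies BFKO exactly once per inductive step, to the full product $(X_{0}^{[l]}\times\dots\times X_{k}^{[l]},\m_{x_{0},\dots,x_{k}})$. Your first BFKO application (Case~(B), $j=0$) is correct and is essentially the paper's step $k=0\to 1$; the issue is what comes after.

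The gap is in the phrase ``at the $j$-th step one applies BFKO to $X_{0}^{[l]}\times\dots\times X_{j}^{[l]}$ equipped with an ergodic component of $\m_{y_{0}}\otimes\dots\otimes\m_{y_{j}}$, reducing the self-correlation to Lemma~\ref{lem:Yl}(\ref{it:orth}) at level $l+1$''. Three things are hidden here, and each is substantial. First, you need the specific point $(x_{0},\dots,x_{j})$ to be \emph{generic} for such an ergodic component; this is the paper's Theorem~\ref{thm:return-times-disintegration}, and it already consumes the inductive hypothesis \RTT{j,\cdot}. Second, you need that component to actually be ergodic and to know its Kronecker factor well enough to verify $\otimes_{i\le j}f_{i}^{[l]}\perp\HKZ_{1}$; this is Lemma~\ref{lem:m-tilde-m} and Corollaries~\ref{cor:m-ergodic}, \ref{cor:orth}, whose proofs use Lemmas~\ref{lem:cube-change-order}, \ref{lem:prod-fiber-erg} \emph{together with} the characteristic-factor part of \RTT{j,\cdot}. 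Third, the self-correlation at step $j>0$ does \emph{not} reduce to Lemma~\ref{lem:Yl}(\ref{it:orth}): that lemma concerns a single system $X$, whereas here you face a product $X_{0}^{[l+1]}\times\dots\times X_{j}^{[l+1]}$, and the vanishing you need is an instance of \RTT{j,l+1} reinterpreted via the change-of-order Lemma~\ref{lem:cube-order}. In other words, your iterative BFKO plan silently re-derives the entire machinery of \S\ref{sec:RTT-cube} at each $j$, and the paper makes explicit that all of this (Theorem~\ref{thm:return-times-disintegration} through Corollary~\ref{cor:orth}) must be carried along in a \emph{joint} induction with \RTT{k,\cdot}. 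Your Case~(A) has a related problem: once $w_{n}(x_{0})$ is a nilsequence you still need $[l]$-u.a.e.\ convergence on $X_{1}^{[l]}\times\dots\times X_{k+1}^{[l]}$, which is not the bare inductive hypothesis \RTT{k,l} but the Wiener--Wintner theorem applied on the ergodic system $(\prod_{i}X_{i}^{[l]},\m_{x_{0},\dots,x_{k}})$—again requiring the disintegration you have not yet built.
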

We refer to the statement of Theorem~\ref{thm:RTT-cube} with fixed $k,l$ as \RTT{k,l} and fixed $k$ and arbitrary $l$ as \RTT{k,\cdot}.
Note that \RTT{k,0} not only contains \RTT{k} but also gives additional information \ref{eq:cf}$(k,0)$ about characteristic factors.

We prove \RTT{k,\cdot} by induction on $k$.
The base case $k=0$ follows by definition of $Y_{0,l}$ and the pointwise ergodic theorem.

For the remaining part of this section we assume \RTT{k,\cdot} for some fixed $k$ and prove \RTT{k+1,\cdot}.
If $k>0$ then we also assume all other results of this section for $k-1$ in place of $k$ (thus, strictly speaking, it is the conjunction of all results in this section that is proved by induction).

In order to prove \RTT{k+1,l} for a given $l$ we write
\begin{equation}
\label{eq:X}
X_{0}^{[l+1]} \times\dots\times X_{k}^{[l+1]}
=
(X_{0}^{[l]} \times\dots\times X_{k}^{[l]})^{2}=:X^{2}.
\end{equation}
From \RTT{k,l+1} we know that the appropriate ergodic averages converge $[l+1]$-u.a.e.\ on $X^{2}$.
We would like to apply Proposition~\ref{prop:BFKO} with this $X$ and $Y=X_{k+1}^{[l]}$.
The remaining part of this section is dedicated to reformulating \RTT{k,l+1} in such a way that it can be plugged into Proposition~\ref{prop:BFKO}.

This involves the following steps.
First we use \RTT{k,\cdot} to construct a certain universal measure disintegration with built-in genericity properties on a product of ergodic systems (Theorem~\ref{thm:return-times-disintegration}).
We use characteristic factors for \RTT{k,\cdot} to represent measures in this disintegration in a different way.
Finally, we verify a certain instance of \RTT{k+1,\cdot} (Lemma~\ref{lem:cube-meas}).

\subsection{Universal disintegration of product measures}
The return times theorem can be seen as a statement about measure disintegration, cf.\ \cite{MR1357765}*{Theorem 4} for the special case $k=1$.
\begin{theorem}
\label{thm:return-times-disintegration}
Let $(X_{i},\mu_{i},T_{i},D_{i})$, $i=0,\dots,k$, be systems.
Then $[l]$-u.a.e.\ $x_{0},\dots,x_{k}$ is generic for some measure $\m_{x_{0},\dots,x_{k}}$ on $X_{0}^{[l]}\times\dots\times X_{k}^{[l]}$ and every function $\otimes_{i=0}^{k}f_{i}^{[l]}$, $f_{i,\epsilon}\in D_{i}$.

Moreover, for $[l]$-u.a.e. $x_{0},\dots,x_{k-1}$ and every $y_{k}\in Y_{l,k}$ one has
\begin{equation}
\label{eq:m-disintegration}
\m_{x_{0},\dots,x_{k-1}}\otimes\m_{y_{k}} = \int \m_{x_{0},\dots,x_{k}} \dif\m_{y_{k}}(x_{k}).
\end{equation}
\end{theorem}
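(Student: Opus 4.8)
The plan is to handle the two assertions in turn: the existence and genericity of the measures $\m_{x_0,\dots,x_k}$ (essentially a repackaging of \RTT{k,l}), and then the disintegration identity \eqref{eq:m-disintegration} (a short computation once the first part is available).

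For the first assertion I would start from \RTT{k,l}: for $[l]$-u.a.e.\ $x_0,\dots,x_k$ the averages $\aveFN\prod_{i=0}^k f_i^{[l]}(T_i^{[l]n}x_i)$ converge for all $f_{i,\epsilon}\in D_i$, and by the characteristicity clause of \RTT{k,l} the limit is $0$ whenever \ref{eq:cf}$(k,l)$ holds. Intersecting the universal sets over the countable family of tensor products $\bigotimes_i f_i^{[l]}$, $f_{i,\epsilon}\in D_i$, and using the Density clause of Definition~\ref{def:D} (so that such tensor products are uniformly dense in $C(\prod_i X_i^{[l]})$) together with the fact that each averaging operator is a supremum-norm contraction, one upgrades this to convergence of $\aveFN F(T_0^{[l]n}x_0,\dots,T_k^{[l]n}x_k)$ for every continuous $F$ on $\prod_i X_i^{[l]}$; hence $\vec x$ is generic for a Borel probability measure $\m_{x_0,\dots,x_k}$, which is $\big(\prod_i T_i^{[l]}\big)$-invariant since $(F_N)$ is F\o{}lner. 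The one delicate point is to identify the limit of the (possibly discontinuous) $D_i$-tensor-product averages with the integral against $\m_{x_0,\dots,x_k}$; here one invokes the Decomposition clause of Definition~\ref{def:D}, writing each $f_{i,\epsilon}=f_\perp+f_{\HKZ,j}+f_{err,j}$ with the pro-nilfactor parameter chosen so that every term retaining a factor $f_\perp$ falls under \ref{eq:cf}$(k,l)$, expanding the tensor product, discarding the terms containing a factor $f_\perp$ (their average and integral contributions both vanish by the characteristicity part of \RTT{k,l}), bounding the terms containing a factor $f_{err,j}$ by $\|f_{err,j}\|_{L^1(\mu_i)}\to 0$ — this is where one uses that the coordinates of $\m_{y_i}$-a.e.\ $x_i$ lie in $\tilde X_i$, as guaranteed by Lemma~\ref{lem:Yl}(\ref{it:generic}) — and treating the remaining tensor product, whose factors are the continuous nilfactor functions $f_{\HKZ,j}$, by genericity of $\vec x$ for continuous functions.

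For the disintegration identity I would fix a tuple $x_0,\dots,x_{k-1}$ that is $[l]$-u.a.e.\ in the sense of the systems $X_0,\dots,X_{k-1}$, so that, by the present theorem for $k-1$ (available by the running induction of the section), $(x_0,\dots,x_{k-1})$ is generic for $\m_{x_0,\dots,x_{k-1}}$; after intersecting this universal set with the one from the first part, one also has, for every $y_k\in Y_{k,l}$ and $\m_{y_k}$-a.e.\ $x_k$, that $(x_0,\dots,x_k)$ is generic for $\m_{x_0,\dots,x_k}$. For each continuous $F$ on $\prod_i X_i^{[l]}$ the map $x_k\mapsto\int F\,\dif\m_{x_0,\dots,x_k}=\lim_N\aveFN F(T_0^{[l]n}x_0,\dots,T_k^{[l]n}x_k)$ is a pointwise limit of continuous functions, hence measurable, so $\int\m_{x_0,\dots,x_k}\dif\m_{y_k}(x_k)$ is a well-defined Borel probability measure on $\prod_i X_i^{[l]}$. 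I then test both sides of \eqref{eq:m-disintegration} against $\bigotimes_{i=0}^k f_i^{[l]}$, $f_{i,\epsilon}\in D_i$. The left side factors as $\big(\int\bigotimes_{i<k}f_i^{[l]}\dif\m_{x_0,\dots,x_{k-1}}\big)\big(\int f_k^{[l]}\dif\m_{y_k}\big)$. For the right side, genericity turns the inner integral into $\lim_N\aveFN\prod_{i=0}^k f_i^{[l]}(T_i^{[l]n}x_i)$; dominated convergence (the integrand is bounded by $\prod_{i,\epsilon}\|f_{i,\epsilon}\|_{L^\infty}$) pulls $\lim_N\aveFN$ outside the $\m_{y_k}$-integral over $x_k$; and the $T_k^{[l]}$-invariance of the ergodic measure $\m_{y_k}$ turns $\int f_k^{[l]}(T_k^{[l]n}x_k)\dif\m_{y_k}(x_k)$ into the constant $\int f_k^{[l]}\dif\m_{y_k}$, leaving $\big(\int f_k^{[l]}\dif\m_{y_k}\big)\lim_N\aveFN\prod_{i<k}f_i^{[l]}(T_i^{[l]n}x_i)$, which by genericity of $(x_0,\dots,x_{k-1})$ equals the left side. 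Since these tensor products are uniformly dense in $C(\prod_i X_i^{[l]})$, the two measures agree.

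The genuine work lies in the first assertion and in the bookkeeping — coordinating the nested universal sets produced by \RTT{k,l} and by the $(k-1)$-instance of this theorem into one family compatible with Definition~\ref{def:luae}, and carrying out the passage from convergence of the $D_i$-averages to genericity for a measure that captures those limits, which is precisely what the structural conditions on $D$ and the genericity built into the $Y_{i,l}$ were arranged to allow. The identity \eqref{eq:m-disintegration} itself is a routine manipulation with dominated convergence and invariance, closely parallel to the ergodic-decomposition computation around \eqref{eq:my-mx}. The base case $k=0$ is immediate: genericity is the pointwise ergodic theorem applied within $Y_{0,l}$, and the identity reduces to $\m_{y_0}=\int\m_{x_0}\dif\m_{y_0}(x_0)$ for $y_0\in Y_{0,l}$, which holds because $\m_{y_0}$ is ergodic, so $\m_{y_0}$-a.e.\ $x_0$ is generic for $\m_{y_0}$.
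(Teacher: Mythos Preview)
Your argument follows the paper's approach closely: both construct $\m_{x_0,\dots,x_k}$ from \RTT{k,l} via Stone--Weierstra\ss{} on tensor products of continuous $D_i$-functions, and both verify \eqref{eq:m-disintegration} by the same genericity/dominated convergence/$T_k^{[l]}$-invariance computation. The paper's proof actually stops at continuous $f_{i,\epsilon}\in D_i$ and does not spell out genericity for the remaining (possibly discontinuous) elements of $D_i$, so the decomposition argument you add is a genuine and useful elaboration.

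One point in that elaboration needs tightening. You assert that for terms containing a factor $f_\perp$, ``their average and integral contributions both vanish by the characteristicity part of \RTT{k,l}.'' Characteristicity gives only that the \emph{average} limit is zero; it says nothing directly about $\int(\cdots f_\perp\cdots)\,\dif\m_{\vec x}$, since $\m_{\vec x}$ is so far only identified with average limits on \emph{continuous} tensor products. To close this, first observe (using Lemma~\ref{lem:Yl}(\ref{it:generic})) that each one-dimensional marginal of $\m_{\vec x}$ on $X_i$ equals $\mu_i$; then approximate $f_\perp$ in $L^1(\mu_i)$ by continuous $g_m\in D_i$. Since $D_i$ is an algebra closed under absolute value and each coordinate of $\vec x$ lies in $\tilde X_i$, the point is generic for $|g_m-f_\perp|$, whence $\bigl|\lim_N\aveFN g_m\otimes(\text{rest})\bigr|\lesssim\|g_m-f_\perp\|_{L^1(\mu_i)}\to 0$; combined with genericity for the (now continuous) $g_m$-factor via an induction on the number of non-continuous factors, this yields $\int(\cdots f_\perp\cdots)\,\dif\m_{\vec x}=0$. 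With this patch your proof is complete and in fact more detailed than the paper's.
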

\begin{proof}
By Theorem~\ref{thm:RTT-cube} with $l=0$ we obtain convergence of the averages
\[
\aveFN \prod_{i=0}^{k}f_{i}^{[l]}(T_{i}^{n}x_{i})
\]
for $[l]$-u.a.e.\ $x_{0},\dots,x_{k}$ and any $f_{i,\epsilon}\in D_{i}$.
For continuous functions $f_{i,\epsilon}\in D_{i}$ we define $\m_{x_{0},\dots,x_{k}}(\otimes_{i=0}^{k}f_{i}^{[l]})$ as the limit of these averages.
By the Stone-Weierstraß theorem these tensor products span a dense subspace $C(X_{0}^{[l]}\times\dots\times X_{k}^{[l]})$, so by density the above (bounded) linear form admits a unique continuous extension.

In order to obtain \eqref{eq:m-disintegration} it suffices to verify that the integrals of functions of the form $\otimes_{i=0}^{k} f_{i}^{[l]}$, $f_{i,\epsilon}\in D_{i}$, with respect to both measures coincide.
By genericity and the dominated convergence theorem we have for $[l]$-u.a.e.\ $x_{0},\dots,x_{k-1}$ that
\begin{multline*}
\int \int \otimes_{i<k}f_{i}^{[l]}\otimes f_{k}^{[l]} \dif\m_{x_{0},\dots,x_{k}} \dif\m_{y_{k}}(x_{k})\\
=
\int \lim_{N} \aveFN \prod_{i<k}f_{i}^{[l]}(T_{i}^{n}x_{i})\cdot f_{k}^{[l]}(T_{k}^{n}x_{k}) \dif\m_{y_{k}}(x_{k})\\
=
\lim_{N} \aveFN \prod_{i<k}f_{i}^{[l]}(T_{i}^{n}x_{i})\cdot \int f_{k}^{[l]}(T_{k}^{n}x_{k}) \dif\m_{y_{k}}(x_{k})\\
=
\int \otimes_{i<k}f_{i}^{[l]} \dif m_{x_{0},\dots,x_{k-1}} \int f_{k}^{[l]} \dif\m_{y_{k}}
\end{multline*}
as required.
\end{proof}
\subsection{Properties of the universal disintegration}
We will now represent the measure $\m_{x_{0},\dots,x_{k}}$ for $[l]$-u.a.e.\ $x_{0},\dots,x_{k}$ in the form $\tilde\m_{x,y}$ in the notation of Lemma~\ref{lem:cube-change-order}.
At this step we have to use the information about characteristic factors.
We begin with a preliminary observation.
\begin{lemma}
\label{lem:uae-fiber-ae}
If some property P holds for $[l]$-u.a.e.\ $x_{0},\dots,x_{k}$ then, for $[l]$-u.a.e.\ $x_{0},\dots,x_{k}$, P holds $\m_{x_{0},\dots,x_{k}}$-a.e.
\end{lemma}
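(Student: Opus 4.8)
The plan is to exploit the disintegration identity \eqref{eq:m-disintegration} from Theorem~\ref{thm:return-times-disintegration} together with the fact that, in Definition~\ref{def:luae}, the universal set at level $k$ is required to have full measure with respect to $\m_{y_{k}}$ for every $y_{k}\in Y_{k,l}$, not merely with respect to $\mu_{k}^{[l]}$. So suppose $P$ holds $[l]$-u.a.e., witnessed by a nested family of sets $\tilde X_{0}^{[l]},\dots,\tilde X_{k}^{[l]}$ as in Definition~\ref{def:luae}. I want to produce a new nested family $\hat X_{0}^{[l]},\dots,\hat X_{k}^{[l]}$ witnessing that ``$P$ holds $\m_{x_{0},\dots,x_{k}}$-a.e.'' holds $[l]$-u.a.e. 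The idea is to take $\hat X_{i}^{[l]} = \tilde X_{i}^{[l]}$ for $i < k$ and to shrink the last set: for fixed $\vec x_{k-1}\in\tilde X_{k-1}^{[l]}$ and fixed $y_{k}\in Y_{k,l}$, define $\hat X_{k}^{[l]}$ to consist of those $x_{k}$ with $(\vec x_{k-1},x_{k})\in\tilde X_{k}^{[l]}$ for which additionally $\m_{\vec x_{k-1},x_{k}}(\tilde X_{k}^{[l]}) = 1$.

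First I would observe that by Theorem~\ref{thm:return-times-disintegration}, after intersecting with the (still $[l]$-u.a.e.) set on which \eqref{eq:m-disintegration} holds, we have for $[l]$-u.a.e.\ $\vec x_{k-1}$ and every $y_{k}\in Y_{k,l}$ the identity $\m_{\vec x_{k-1}}\otimes\m_{y_{k}} = \int \m_{\vec x_{k-1},x_{k}}\dif\m_{y_{k}}(x_{k})$. Now $\tilde X_{k}^{[l]}$ has $\m_{y_{k}}$-full measure in the $x_{k}$ fiber for every $y_{k}\in Y_{k,l}$; I would like to conclude that the set $E$ of $x_{k}$ with $\m_{\vec x_{k-1},x_{k}}(\{x_{k}': (\vec x_{k-1},x_{k}')\in\tilde X_{k}^{[l]}\}) < 1$ is $\m_{y_{k}}$-null. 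The natural move is a Fubini-type argument: if $A\subset X_{0}^{[l]}\times\dots\times X_{k}^{[l]}$ denotes the set cut out by $\tilde X_{k}^{[l]}$ in the last coordinate (with $\vec x_{k-1}$ fixed), then $(\m_{\vec x_{k-1}}\otimes\m_{y_{k}})(A)$ can be computed two ways — either directly, using that the slice of $A$ over each $x_{k}$ is the same set $\tilde X_{k}^{[l]}$-slice, or via the disintegration $\int \m_{\vec x_{k-1},x_{k}}(A)\dif\m_{y_{k}}(x_{k})$. Comparing the two gives that $\m_{\vec x_{k-1},x_{k}}(A) = 1$ for $\m_{y_{k}}$-a.e.\ $x_{k}$, which is exactly what is needed so that $\hat X_{k}^{[l]}$ has $\m_{y_{k}}$-full measure in the last fiber for every $y_{k}\in Y_{k,l}$.

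Then, by construction, every $\vec x_{k}\in\hat X_{k}^{[l]}$ satisfies $\m_{\vec x_{k}}(\tilde X_{k}^{[l]}) = 1$, and since $P(\dots,\vec x_{k}')$ holds for every $\vec x_{k}'\in\tilde X_{k}^{[l]}$, we get $P$ $\m_{\vec x_{k}}$-a.e.; the nesting and full-measure conditions of Definition~\ref{def:luae} are preserved because we only shrank the last set and only removed an $\m_{y_{k}}$-null set. One technical point to watch is measurability of the map $x_{k}\mapsto\m_{\vec x_{k-1},x_{k}}$ (and jointly in $\vec x_{k-1}$), which should follow from the standard measurability of ergodic-decomposition maps already invoked implicitly in the disintegration \eqref{eq:m-disintegration}; the main obstacle, such as it is, is simply to set up the Fubini computation cleanly so that the two evaluations of $(\m_{\vec x_{k-1}}\otimes\m_{y_{k}})(A)$ genuinely match, but this is routine given \eqref{eq:m-disintegration}.
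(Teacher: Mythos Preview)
Your Fubini step has a real gap. The measure $\m_{\vec x_{k-1},x_{k}}$ lives on all of $X_{0}^{[l]}\times\dots\times X_{k}^{[l]}$, not just on $\{\vec x_{k-1}\}\times X_{k}^{[l]}$: an $\m_{\vec x_{k-1},x_{k}}$-generic point has the form $(\vec\xi,\xi_{k})$ with $\vec\xi$ typically different from the fixed $\vec x_{k-1}$. Your set $A$ is the cylinder $(X_{0}^{[l]}\times\dots\times X_{k-1}^{[l]})\times S$ over the single slice $S=\{x_{k}':(\vec x_{k-1},x_{k}')\in\tilde X_{k}^{[l]}\}$, and indeed $\m_{\vec x_{k-1},x_{k}}(A)=1$ for $\m_{y_{k}}$-a.e.\ $x_{k}$ follows from \eqref{eq:m-disintegration}. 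But $A\neq\tilde X_{k}^{[l]}$, and $(\vec\xi,\xi_{k})\in A$ says only that $\xi_{k}\in S$, not that $(\vec\xi,\xi_{k})\in\tilde X_{k}^{[l]}$. So you have not established $\m_{\vec x_{k-1},x_{k}}(\tilde X_{k}^{[l]})=1$, and hence not that $P$ holds $\m_{\vec x_{k-1},x_{k}}$-a.e.

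What is missing is precisely an induction on $k$. To get $(\m_{\vec x_{k-1}}\otimes\m_{y_{k}})(\tilde X_{k}^{[l]})=1$---which, via \eqref{eq:m-disintegration}, would yield $\m_{\vec x_{k-1},x_{k}}(\tilde X_{k}^{[l]})=1$ for $\m_{y_{k}}$-a.e.\ $x_{k}$---you need the slice of $\tilde X_{k}^{[l]}$ over $\vec\xi$ to have $\m_{y_{k}}$-full measure for $\m_{\vec x_{k-1}}$-a.e.\ $\vec\xi$, not merely for $\vec\xi=\vec x_{k-1}$. Definition~\ref{def:luae} guarantees this only for $\vec\xi\in\tilde X_{k-1}^{[l]}$, so you need $\m_{\vec x_{k-1}}(\tilde X_{k-1}^{[l]})=1$, which is exactly the lemma at level $k-1$. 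The paper's proof is organized as this induction; your plan of keeping $\hat X_{i}^{[l]}=\tilde X_{i}^{[l]}$ for $i<k$ and shrinking only the last set cannot succeed without it.
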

\begin{proof}
For $k=0$ this follows from \eqref{eq:m-disintegration}.
Assume that the conclusion is known for $k-1$ and show it for $k$.

By the induction hypothesis, for $[l]$-u.a.e.\ $x_{0},\dots,x_{k-1}$, $\m_{x_{0},\dots,x_{k-1}}$-a.e., for every $y_{k}\in Y_{k,l}$, P holds $\m_{y_{k}}$-a.e.\ in $x_{k}$.
The conclusion follows from \eqref{eq:m-disintegration}.
\end{proof}
\begin{lemma}
\label{lem:m-tilde-m}
For $[l]$-u.a.e.\ $x_{0},\dots,x_{k}$ we have
\[
\m_{x_{0},\dots,x_{k}}=\tilde\m_{x,y},
\]
where we use the notation of Lemma~\ref{lem:cube-change-order} with
$(X,\mu)=(X_{0}^{[l]}\times\dots\times X_{k-1}^{[l]},\m_{x_{0},\dots,x_{k-1}})$, $(Y,\nu)=(X_{k}^{[l]},\m_{x_{k}})$, $x=(x_{0},\dots,x_{k-1})$ and $y=x_{k}$.
\end{lemma}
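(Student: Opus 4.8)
The plan is to check that $\m_{x_{0},\dots,x_{k}}$ and $\tilde\m_{x,y}$ assign the same value to every function $\otimes_{i=0}^{k}f_{i}^{[l]}$ with $f_{i,\epsilon}\in D_{i}$; since these span a dense subspace of $C(X_{0}^{[l]}\times\dots\times X_{k}^{[l]})$ by the Stone-Weierstraß theorem and there are only countably many of them, this is enough, and the exceptional $[l]$-u.a.e.\ set can be taken once and for all. Write $\mu:=\m_{x_{0},\dots,x_{k-1}}$, $\nu:=\m_{x_{k}}$, $X:=X_{0}^{[l]}\times\dots\times X_{k-1}^{[l]}$, $Y:=X_{k}^{[l]}$, and let $\psi\colon\HKZ_{1}(X)\times\HKZ_{1}(Y)\to H$ and the Kronecker projections $\pi_{1}$ be as in Lemma~\ref{lem:cube-change-order}; both measures are joinings of $(X,\mu)$ and $(Y,\nu)$.

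The first step is to recognise $\m_{x_{0},\dots,x_{k}}$ as an ergodic component of $\mu\otimes\nu$. That $\m_{x_{0},\dots,x_{k}}$ is ergodic for $[l]$-u.a.e.\ $x_{0},\dots,x_{k}$ follows by an argument of the kind used in Section~\ref{sec:tools} around \eqref{eq:my-mx}, now applied on $\prod_{i=0}^{k}X_{i}^{[l+1]}$ and invoking \RTT{k,l+1}. Applying \eqref{eq:m-disintegration} with $y_{k}=x_{k}$ (legitimate $[l]$-u.a.e., since then $x_{k}\in Y_{k,l}$) gives $\mu\otimes\nu=\int\m_{x_{0},\dots,x_{k-1},x_{k}'}\,\dif\nu(x_{k}')$, so by uniqueness of the ergodic decomposition $\m_{x_{0},\dots,x_{k}}$ is one of the ergodic components of $\mu\otimes\nu$. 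Now recall (as noted before Lemma~\ref{lem:cube-change-order}, via Lemma~\ref{lem:Z1-char-weight}) that the invariant $\sigma$-algebra of $X\times Y$ is generated by $\phi:=\psi(\pi_{1}(\cdot),\pi_{1}(\cdot))$; disintegrating $\mu=\int\mu_{\kappa}\,\dif\kappa$ and $\nu=\int\nu_{\lambda}\,\dif\lambda$ over the Kronecker factors and regrouping by the value of $\psi(\kappa,\lambda)$ identifies the ergodic decomposition of $\mu\otimes\nu$ with $\int_{H}\tilde\m^{(c)}\,\dif c$, where $\tilde\m^{(c)}:=\int_{\psi(\kappa,\lambda)=c}\mu_{\kappa}\otimes\nu_{\lambda}\,\dif(\kappa,\lambda)$ and $c$ runs over $H$ with its Haar measure. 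Hence $\m_{x_{0},\dots,x_{k}}=\tilde\m^{(c)}$ for some $c\in H$, whereas $\tilde\m_{x,y}=\tilde\m^{(\psi(\pi_{1}(x),\pi_{1}(y)))}$ by definition; it remains to identify $c$.

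For the identification I would reduce the $f_{i,\epsilon}$ to nilfactor-measurable functions. Apply the decomposition \ref{eq:dec} of $D_{i}$ at orthogonality level $k+l$ for $i=0$, level $k+l+1-i$ for $1\le i\le k-1$, and level $l+1$ for $i=k$ --- all at least $l+1$ --- and expand the tensor products. Any summand with a factor $f_{i,\epsilon,\perp}$ integrates to zero against $\m_{x_{0},\dots,x_{k}}$ by the characteristic-factor clause \ref{eq:cf}$(k,l)$ of \RTT{k,l}, and also against $\tilde\m_{x,y}$: by Lemma~\ref{lem:Yl}(\ref{it:orth}) the $i$-th tensor block of such a summand is orthogonal to $\HKZ_{1}(X_{i}^{[l]},\m_{x_{i}})$, so its conditional expectation over the $i$-th Kronecker factor vanishes, and $\tilde\m_{x,y}$ is relatively independent over the Kronecker factors of $X$ and $Y$ (for $i<k$ this last point uses the inductive hypothesis, Lemma~\ref{lem:m-tilde-m} for $k-1$, which identifies $\mu$ and hence describes $\HKZ_{1}(X,\mu)$). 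The remaining error summands are bounded in $L^{\infty}$ and small in $L^{1}(\m_{x_{i}})$, hence --- both measures being joinings with $i$-th marginal $\m_{x_{i}}$ --- contribute a uniformly small total. We are left with the case in which each $f_{i,\epsilon}$ is continuous and measurable with respect to a nilfactor; on the resulting nilmanifolds $\mu$ and $\nu$ push forward to Haar measures on sub-nilmanifolds, the maps $\pi_{1}$ and $\phi$ admit continuous models (with $\phi$ invariant), and the images of $x$ and $x_{k}$ lie on those sub-nilmanifolds, so $\psi(\pi_{1}(x),\pi_{1}(x_{k}))$ is literally defined. For any continuous $H_{0}$ on $H$, the orbit averages of $H_{0}\circ\phi$ at $(x_{0},\dots,x_{k})$ are then constant equal to $H_{0}(\psi(\pi_{1}(x),\pi_{1}(x_{k})))$, while by genericity they converge to $\int H_{0}\circ\phi\,\dif\m_{x_{0},\dots,x_{k}}=\int H_{0}\circ\phi\,\dif\tilde\m^{(c)}=H_{0}(c)$; therefore $c=\psi(\pi_{1}(x),\pi_{1}(x_{k}))$, i.e.\ $\m_{x_{0},\dots,x_{k}}=\tilde\m_{x,y}$.

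The step I expect to be the main obstacle is this final identification of $c$. It needs continuous models of the Kronecker factors of $X$ and $Y$ in which the orbit of the particular point $(x_{0},\dots,x_{k})$ behaves well, and it is exactly here that one must invoke the inductive hypothesis (\RTT{k-1,\cdot} together with Lemma~\ref{lem:m-tilde-m} for $k-1$) to pin down $\mu=\m_{x_{0},\dots,x_{k-1}}$, hence $\HKZ_{1}(X,\mu)$, and so legitimise the reduction to nilfactors and the ensuing continuity statements. A secondary delicate point --- again via a Section~\ref{sec:tools}-type computation, now using \RTT{k,l+1} --- is the ergodicity of $\m_{x_{0},\dots,x_{k}}$ that the appeal to uniqueness of the ergodic decomposition rests on.
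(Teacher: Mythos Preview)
Your overall architecture — decompose test functions via \ref{eq:dec}, kill $\perp$-terms with Corollary~\ref{cor:orth} for $k-1$ and \ref{eq:cf}$(k,l)$, control error terms via the marginal structure, and handle main terms on nilfactors — is exactly the paper's. The difference lies in how the main terms are treated, and here your proof has a genuine gap.

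The gap is the ergodicity claim. You assert that $\m_{x_{0},\dots,x_{k}}$ is ergodic ``by an argument of the kind used around \eqref{eq:my-mx}, invoking \RTT{k,l+1}'', and then use uniqueness of the ergodic decomposition to write $\m_{x_{0},\dots,x_{k}}=\tilde\m^{(c)}$. But the \eqref{eq:my-mx}-computation relies on an \emph{ergodic} ambient measure with respect to which the double integral is taken; there is no such measure available here, and unwinding your sketch one finds that the identity you would need, namely $\int \m_{\vec\xi}\otimes\m_{\vec\xi}\,\dif\m_{\vec x}(\vec\xi)=\m_{\vec x}\otimes\m_{\vec x}$, is itself equivalent to the ergodicity you are trying to prove. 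Nor does \RTT{k,l+1} help directly: to pass from the $[l+1]$-u.a.e.\ statement to a statement $\m_{x_{0},\dots,x_{k}}$-a.e.\ one needs Lemma~\ref{lem:cube-order}, whose proof for level $k$ uses Lemma~\ref{lem:m-tilde-m} for $k$. In the paper the logic runs the other way: ergodicity is Corollary~\ref{cor:m-ergodic}, deduced \emph{from} Lemma~\ref{lem:m-tilde-m} via the main-term identity~\eqref{eq:m-erg-main-term}. So as written your argument is circular.

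The paper avoids this by never appealing to ergodicity. For the main terms it computes $\int\otimes_{i}f_{i,\HKZ,j}^{[l]}\,\dif\tilde\m_{x,y}$ as a fiber integral over the nilmanifold $N_{\lambda,\lambda'}$ of Lemma~\ref{lem:prod-fiber-erg}, uses that lemma to get \emph{unique ergodicity} of the fiber for every $x$ and a.e.\ $y$, and hence identifies the integral with the orbital limit at $(x,y)$, which by definition is $\int\otimes_{i}f_{i,\HKZ,j}^{[l]}\,\dif\m_{x,y}$. This gives the equality of integrals directly, without ever needing to know that $\m_{x_{0},\dots,x_{k}}$ is a single ergodic component; Lemma~\ref{lem:prod-fiber-erg} is precisely the ingredient you are missing. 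Your $H_{0}\circ\phi$ device could be salvaged by first observing (from \eqref{eq:m-disintegration} and uniqueness of the ergodic decomposition of $\mu\otimes\nu$) that $\m_{x_{0},\dots,x_{k}}$ is at least a \emph{mixture} $\int\tilde\m^{(c)}\,\dif\rho(c)$, and then using genericity to force $\rho$ to be a Dirac mass — but that is not what you wrote, and the identification of $\rho$ still tacitly relies on the nilfiber being uniquely ergodic.
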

\begin{proof}
To verify that the measures coincide it suffices to check that the integrals of functions of the form $\otimes_{i=0}^{k}f_{i}^{[l]}$, $f_{i,\epsilon}\in D_{i}$ coincide.
For this end consider the splittings $f_{i,\epsilon}=f_{i,\epsilon,\perp}+f_{i,\epsilon,\HKZ,j}+f_{i,\epsilon,err,j}$, $j\in\N$, given by \ref{eq:dec}$(k+l+1-i)$.

Projections of tensor products that involve $f_{i,\epsilon,\perp}$ on one of the Kronecker factors vanish a.e. for $[l]$-u.a.e. $x,y$ by Corollary~\ref{cor:orth} for $k-1$ that is part of the induction hypothesis for this section.
Since $\ker\psi$ has full projections on both coordinates the corresponding integrals w.r.t. $\tilde\m_{x,y}$ also vanish.
The integrals w.r.t. $\m_{x,y}$ vanish for $[l]$-u.a.e. $x,y$ by Theorem~\ref{thm:RTT-cube}.

For the main terms we have
\begin{multline}
\label{eq:m-erg-main-term}
\int \otimes_{i=0}^{k} f_{i,\HKZ,j}^{[l]} \dif\tilde\m_{x,y}\\
=
\int_{\kappa\in\HKZ_{1}(X),\lambda\in\HKZ_{1}(Y):\psi(\pi_{1}(x),\pi_{1}(y))=\psi(\kappa,\lambda)} \E(\otimes_{i=0}^{k-1} f_{i,\HKZ,j}^{[l]}|\HKZ_{1}(X))(\kappa) \E(f_{k,\HKZ,j}^{[l]}|\HKZ_{1}(Y))(\lambda) \dif(\kappa,\lambda).
\end{multline}
Since the underlying nilmanifold of a nilsystem is a bundle of nilmanifolds over its Kronecker factor, the conditional expectation above is just integration in the fibers, and by uniqueness of the Haar measure the whole integral equals
\[
\int_{\kappa\in Z_{j},\lambda\in Z_{j}':\psi(\pi_{1}(x),\pi_{1}(y))=\psi(\pi_{1}(\kappa),\pi_{1}(\lambda))} \otimes_{i=0}^{k-1} f_{i,\HKZ,j}^{[l]}(\kappa) f_{k,\HKZ,j}^{[l]}(\lambda) \dif(\kappa,\lambda),
\]
where $Z_{j}$ is the orbit closure of $x$ in $\prod_{i=0}^{k-1}Z_{i,j}^{[l]}$ and $Z_{j}'$ is the orbit closure of $y$ in $Z_{k,j}^{[l]}$.
By Lemma~\ref{lem:prod-fiber-erg}, the above fibers of $Z_{j}\times Z_{j}'$ are uniquely ergodic for every $x$ and a.e. $y$, and the integral then equals
\[
\lim_{N} \aveFN \otimes_{i=0}^{k-1} f_{i,\HKZ,j}^{[l]}(T^{n}x) f_{k,\HKZ,j}^{[l]}(S^{n}y)
=
\int \otimes_{i=0}^{k}f_{i,\HKZ,j}^{[l]} \dif\m_{x,y}.
\]
It remains to treat the error terms, i.e. the case $f_{i',\epsilon'}=f_{i',\epsilon',err,j}$ for some $i',\epsilon'$.
By Lemma~\ref{lem:Yl}(\ref{it:generic}), for $[l]$-u.a.e. $x,y$ we have
\[
\int \otimes_{i=0}^{k}f_{i}^{[l]} \dif\m_{x,y} \lesssim \|f_{i',\epsilon'}\|_{L^{1}(\mu_{i})}
\to 0 \quad \text{as} \quad j\to\infty.
\]
Similarly, we have $\int | \otimes_{i=0}^{k-1}f_{i}^{[l]} | \dif\m_{x} \lesssim \|f_{i',\epsilon'}\|_{L^{1}(\mu_{i})}$ if $i'<k$ and $\int | f_{k}^{[l]} | \dif\m_{y_{k}} \lesssim \|f_{k,\epsilon}\|_{L^{1}(\mu_{i})}$ if $i'=k$ for $[l]$-u.a.e.\ $x,y$.
This implies that either $\E(\otimes_{i=0}^{k-1}f_{i}^{[l]}|\HKZ_{1}(X))$ or $\E(f_{k}^{[l]}|\HKZ_{1}(Y))$ converges to zero in probability for $[l]$-u.a.e.\ $x,y$, so
\[
\int \otimes_{i=0}^{k}f_{i}^{[l]} \dif \tilde\m_{x,y}
\to 0 \quad \text{as} \quad j\to\infty
\]
for $[l]$-u.a.e. $x_{0},\dots,x_{k}$ since $\ker\psi$ has full projections on coordinates.
\end{proof}
\begin{corollary}
\label{cor:m-ergodic}
For $[l]$-u.a.e.\ $x_{0},\dots,x_{k}$ the measure $\m_{x_{0},\dots,x_{k}}=\tilde\m_{x,y}$ is ergodic.
\end{corollary}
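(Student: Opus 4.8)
The plan is to recognize $\m_{x_{0},\dots,x_{k}}$ as one of the ergodic components of the product measure to which Lemma~\ref{lem:cube-change-order} is applied. First I would invoke Lemma~\ref{lem:m-tilde-m} to replace $\m_{x_{0},\dots,x_{k}}$ by $\tilde\m_{x,y}$ for $[l]$-u.a.e.\ $x_{0},\dots,x_{k}$, using the notation of Lemma~\ref{lem:cube-change-order} with $(X,\mu)=(X_{0}^{[l]}\times\dots\times X_{k-1}^{[l]},\m_{x_{0},\dots,x_{k-1}})$, $(Y,\nu)=(X_{k}^{[l]},\m_{x_{k}})$, $x=(x_{0},\dots,x_{k-1})$ and $y=x_{k}$. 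It then suffices to show that $\tilde\m_{x,y}$ is ergodic for $[l]$-u.a.e.\ $x_{0},\dots,x_{k}$.

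Next I would check that the two systems feeding into Lemma~\ref{lem:cube-change-order} are themselves ergodic, since only then does the description of the ergodic decomposition of $\mu\otimes\nu$ recalled before that lemma apply. The measure $\m_{x_{0},\dots,x_{k-1}}$ is ergodic for $[l]$-u.a.e.\ $x_{0},\dots,x_{k-1}$: for $k>0$ this is the present corollary with $k-1$ in place of $k$, which is part of the induction hypothesis of this section, and for $k=0$ the system $X$ is trivial so there is nothing to prove. The measure $\m_{x_{k}}$ is ergodic because, after shrinking the universal set $\tilde X_{k}$ so that $x_{k}\in Y_{k,l}$ — which is permitted since $\m_{y_{k}}(Y_{k,l})=1$ for every $y_{k}\in Y_{k,l}$ by Lemma~\ref{lem:Yl}(1) — the point $x_{k}$ lies in $Y_{k,l}$, and then $\m_{x_{k}}$ is ergodic by Lemma~\ref{lem:Yl}(\ref{it:disintegration}).

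With $(X,\mu)$ and $(Y,\nu)$ ergodic I would invoke the structure of the invariant factor of $X\times Y$ recalled before Lemma~\ref{lem:cube-change-order}: its invariant sub-$\sigma$-algebra coincides with the invariant sub-$\sigma$-algebra of the product rotation on $\HKZ_{1}(X)\times\HKZ_{1}(Y)$, whose quotient is $H:=(\HKZ_{1}(X)\times\HKZ_{1}(Y))/\ker\psi$ and whose fibers over $H$ are the cosets of $\ker\psi$ equipped with their Haar measures. Disintegrating $\mu\otimes\nu$ along $X\times Y\to\HKZ_{1}(X)\times\HKZ_{1}(Y)\to H$, the fiber measure of $\mu\otimes\nu$ over a point $h\in H$ is $\int_{\psi(\kappa,\lambda)=h}\mu_{\kappa}\otimes\nu_{\lambda}\dif(\kappa,\lambda)$, the integral being taken over the coset $\psi\inv(h)$ with respect to its Haar measure. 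These fiber measures over the invariant sub-$\sigma$-algebra are exactly the ergodic components of $\mu\otimes\nu$, hence ergodic; and $\tilde\m_{x,y}$ is by construction (Lemma~\ref{lem:cube-change-order}) precisely this fiber measure for $h=\psi(\pi_{1}(x),\pi_{1}(y))$. Therefore $\tilde\m_{x,y}$ is ergodic, as required.

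The argument is short, and the only points demanding attention are the bookkeeping for ergodicity of the two factor systems — the induction hypothesis handles the first factor and Lemma~\ref{lem:Yl} the second — together with the observation that the affine subgroup carrying its Haar measure in the definition of $\tilde\m_{x,y}$ is genuinely a fiber of the ergodic decomposition of $\mu\otimes\nu$ and not merely of some auxiliary disintegration.
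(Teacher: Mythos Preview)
Your argument is correct and takes a genuinely different route from the paper's proof. The paper verifies ergodicity by the pointwise criterion: it shows that for continuous tensor products the ergodic averages converge $\m_{\vec x}$-a.e.\ to the integral $\int\otimes_{i}f_{i}^{[l]}\dif\m_{\vec x}$. This is done by invoking Lemma~\ref{lem:uae-fiber-ae} to identify the limit as $\int\otimes_{i}f_{i}^{[l]}\dif\m_{\vec\xi}$, then splitting each $f_{i,\epsilon}$ via \ref{eq:dec} and using the computation \eqref{eq:m-erg-main-term} from the proof of Lemma~\ref{lem:m-tilde-m} for the main terms (the expression there depends only on $\psi(\pi_{1}(x),\pi_{1}(y))$, which is constant on the support of $\tilde\m_{x,y}$). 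Your approach is more structural: you observe that $\tilde\m_{x,y}$ is precisely the fiber of $\mu\otimes\nu$ over the invariant $\sigma$-algebra, hence an ergodic component, and you never touch the \ref{eq:dec} splitting at all.

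One small point you leave implicit: the abstract disintegration over the invariant factor yields ergodic fibers only for \emph{almost every} $h\in H$, so you should note that for fixed $x$ the map $y\mapsto\psi(\pi_{1}(x),\pi_{1}(y))$ pushes $\nu$ onto Haar measure on $H$ (because $\ker\psi$ has full projections), whence $\nu$-a.e.\ $y$ lands in the good set of $h$'s. With that said, your route is shorter and more conceptual; the paper's route has the virtue of reusing the same decomposition machinery that drives the rest of the section and avoids appealing to the abstract ergodic-decomposition statement.
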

Note that even for a non-ergodic invariant measure on a regular system there may exist generic points, so the mere fact that $\vec x$ is generic for $\m_{\vec x}$ does not suffice.
\begin{proof}
In order to see that $\m_{x_{0},\dots,x_{k}}$ is ergodic it suffices to verify that for any continuous functions $f_{i,\epsilon}\in C(X_{i})$ we have
\begin{equation}
\label{eq:criterion-ergodicity-of-m}
\lim_{N}\aveFN \otimes_{i=0}^{k}f_{i}^{[l]}(T^{n}\vec\xi) = \int \otimes_{i=0}^{k}f_{i}^{[l]} \dif\m_{x_{0},\dots,x_{k}}
\quad \text{for } \m_{x_{0},\dots,x_{k}}\text{-a.e.\ }\vec\xi.
\end{equation}
Recall that for $[l]$-u.a.e.\ $x_{0},\dots,x_{k}$ the limit on the left-hand side of \eqref{eq:criterion-ergodicity-of-m} exists for $\m_{x_{0},\dots,x_{k}}$-a.e.\ $\vec\xi$ by Lemma~\ref{lem:uae-fiber-ae} and equals $\int \otimes_{i=0}^{k}f_{i}^{[l]} \dif\m_{\vec\xi}$.
Splitting the $f_{i,\epsilon}$'s as before it suffices to verify \eqref{eq:criterion-ergodicity-of-m} for the main terms, and this follows directly from \eqref{eq:m-erg-main-term}.
\end{proof}

\subsection{The sufficient special case of convergence to zero}
The last hypothesis of Proposition~\ref{prop:BFKO} is a certain special case of its conclusion.
Recall that we already have u.a.e.\ convergence to zero on $X^{2}$ (as defined in \eqref{eq:X}), but not yet in the required sense.
This is now corrected using Lemma~\ref{lem:cube-change-order}.
\begin{lemma}[Change of order in the cube construction]
\label{lem:cube-order}
Let $l\in\N$ and $P$ be a statement about points of $\prod_{i=0}^{k}X_{i}^{[l+1]}$.
Assume that for $[l+1]$-u.a.e.\ $x_{0},\dots,x_{k}$
we have $P(x_{0},\dots,x_{k})$.

Then for $[l]$-u.a.e.\ $x_{0},\dots,x_{k}$,
for $\m_{x_{0},\dots,x_{k}}$-a.e. $x'$,
we have $P(x_{0},\dots,x_{k},x')$.
\end{lemma}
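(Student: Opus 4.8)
The plan is to unwind both the $[l+1]$-u.a.e. hypothesis and the $[l]$-u.a.e. conclusion according to Definition~\ref{def:luae}, and to connect the two via the disintegration identities. The key observation is that the measures $\m_{\vec x}$ on $\prod_i X_i^{[l]}$ constructed in Theorem~\ref{thm:return-times-disintegration} are tied, through Lemma~\ref{lem:m-tilde-m} and the fixed-point sets $Y_{i,l}$ of Lemma~\ref{lem:Yl}, to the cube measures on the next level: concretely, for $[l]$-u.a.e. $\vec x$ the measure $\m_{\vec x}$ disintegrates (via $\tilde\m_{x,y}$ and \eqref{eq:motimesm}) over an affine subgroup of a product of Kronecker factors in a way that is compatible with passing from $X_i^{[l]}$ to $X_i^{[l+1]}=(X_i^{[l]})^2$. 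So a point $x'$ distributed according to $\m_{\vec x}$, together with $\vec x$ itself, assembles into a point of $\prod_i X_i^{[l+1]}$ whose law is (for suitable $\vec x$) the relevant cube measure $\m_{\vec y}$ on that product.

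First I would argue by induction on $k$, mirroring the inductive unwinding in the proof of Lemma~\ref{lem:uae-fiber-ae}. For the base case $k=0$: the $[l+1]$-u.a.e. hypothesis gives, for every system $(X_0,\mu_0,T_0,D_0)$, a set $\tilde X_0^{[l+1]}\subset X_0^{[l+1]}=(X_0^{[l]})^2$ with $\m_{y_0}(\tilde X_0^{[l+1]})=1$ for every $y_0\in Y_{0,l+1}$; by Lemma~\ref{lem:Yl}(\ref{it:disintegration}) and \eqref{eq:motimesm}, $\m_{y_0}\otimes\m_{y_0}=\int_{Y_{0,l+1}}\m_x\,\dif(\m_{y_0}\otimes\m_{y_0})(x)$, so $P$ holds $\m_{y_0}\otimes\m_{y_0}$-a.e., which is exactly $P(x_0,x')$ for $\m_{y_0}$-a.e. $x_0$ and $\m_{x_0}$-a.e. $x'$ (here one also uses that $\m_{x_0}=\m_{x_0}$ in the degenerate $k=0$ case, $\m_{x_0}$ being the measure of Theorem~\ref{thm:return-times-disintegration} for a single system, which coincides with the ergodic component). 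This is precisely the $[l]$-u.a.e. conclusion for $k=0$.

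For the inductive step I would peel off the last coordinate. Given the hypothesis for $k$, fix $\vec x_{k-1}$ and $y_k\in Y_{k,l+1}$; the hypothesis supplies, for $\m_{y_k}$-a.e. $x_k$, the statement $P$ on the fiber — more precisely, using \eqref{eq:m-disintegration} at level $l+1$ together with \eqref{eq:motimesm} for $X_k$, one gets $P$ holding $\m_{\vec x_{k-1}}\otimes\m_{y_k}$-a.e., hence $P(\vec x_{k-1},x_k,\cdot)$ holding $\m_{\vec x_k}$-a.e. for a full-$\m_{y_k}$-measure set of $x_k$, where now $\vec x_k$ ranges over $\prod_i X_i^{[l]}$. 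Combining this with the inductive hypothesis applied to the "remaining" statement about $\prod_{i<k}X_i^{[l+1]}$ (obtained by integrating out the $k$-th coordinate), and reassembling via \eqref{eq:m-disintegration} at level $l$, yields the claimed $[l]$-u.a.e. statement for $k$. The only subtle point — and the step I expect to be the main obstacle — is bookkeeping the precise relationship between the cube measure $\m_{\vec y}$ on $\prod_i X_i^{[l+1]}=(\prod_i X_i^{[l]})^2$ and the measure $\m_{\vec x}$ of Theorem~\ref{thm:return-times-disintegration}, i.e. checking that the "generic point $\vec x$ for $\m_{\vec x}$, plus a $\m_{\vec x}$-random point $x'$" really has the law of a point in $\prod_i X_i^{[l+1]}$ to which the hypothesis applies; this is exactly where Lemma~\ref{lem:m-tilde-m}, Lemma~\ref{lem:cube-change-order}, \eqref{eq:motimesm} and the fixed-point construction of $Y_{i,l}$ must be invoked in concert, and one must be careful that the relevant full-measure exceptional sets are nested correctly along the tower $X_0,\dots,X_k$ so that the final conclusion genuinely has the $[l]$-u.a.e. form of Definition~\ref{def:luae}.
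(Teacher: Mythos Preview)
Your plan follows the paper's approach: induction on $k$, base case via \eqref{eq:motimesm}, and the inductive step via the induction hypothesis together with Lemma~\ref{lem:cube-change-order} and Lemma~\ref{lem:m-tilde-m}. The base case is correct as you describe it.

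In the inductive step, however, the order in which you invoke the tools is inverted relative to what actually works. You try to handle the $X_k$-coordinate first (via \eqref{eq:m-disintegration} and \eqref{eq:motimesm}) and only afterwards apply the induction hypothesis for $k-1$; in particular your sentence ``hence $P(\vec x_{k-1},x_k,\cdot)$ holding $\m_{\vec x_k}$-a.e.\ \dots\ where now $\vec x_k$ ranges over $\prod_i X_i^{[l]}$'' is not justified, because at that stage $\vec x_{k-1}$ is still a point of $\prod_{i<k}X_i^{[l+1]}$ and no tool you have cited brings it down to level $[l]$. More to the point, Lemma~\ref{lem:cube-change-order} needs the measure $\mu$ on the first factor to already be the level-$[l]$ measure $\m_{x_0,\dots,x_{k-1}}$ on $\prod_{i<k}X_i^{[l]}$, so it cannot be applied before the induction hypothesis has done its work.

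The paper reverses the order. One first packages the $X_k$-quantifiers into an auxiliary statement $P'(\vec x_{k-1})$ meaning ``for every $y_k\in Y_{k,l+1}$ and $\m_{y_k}$-a.e.\ $x_k$, $P(\vec x_{k-1},x_k)$'', which holds for $[l+1]$-u.a.e.\ $x_0,\dots,x_{k-1}$ by hypothesis. Applying the induction hypothesis (the lemma for $k-1$) to $P'$ yields: for $[l]$-u.a.e.\ $x_0,\dots,x_{k-1}$, for $\m_{x_0,\dots,x_{k-1}}$-a.e.\ $x'$, $P'$ holds at $(\vec x_{k-1},x')$. Only \emph{then} does one unpack $P'$, rewrite the $X_k$ side via \eqref{eq:motimesm} as a statement over ergodic components of $\m_{\tilde y_k}\otimes\m_{\tilde y_k}$ for $\tilde y_k\in Y_{k,l}$, and apply Lemma~\ref{lem:cube-change-order} with $(X,\mu)=(\prod_{i<k}X_i^{[l]},\m_{x_0,\dots,x_{k-1}})$ and $(Y,\nu)=(X_k^{[l]},\m_{\tilde y_k})$, after which Lemma~\ref{lem:m-tilde-m} identifies $\tilde\m_{x,y}$ with $\m_{x_0,\dots,x_k}$. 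So your phrase ``integrating out the $k$-th coordinate'' is not what happens; one freezes the $X_k$-quantifiers inside $P'$ and carries them through the induction hypothesis unchanged.
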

Strictly speaking, the coordinates of $x'$ in $(x_{0},\dots,x_{k},x')$ should be attached to $x_{0},\dots,x_{k}$ but we do not want to introduce additional notation at this point.
\begin{proof}
The base case $k=0$ follows directly from \eqref{eq:motimesm}.

Assume now that $k>0$.
By the inductive hypothesis of this section the conclusion holds for $k-1$, so
for $[l]$-u.a.e.\ $x_{0},\dots,x_{k-1}$,
\emph{for $\m_{x_{0},\dots,x_{k-1}}$-a.e. $x'$,
for every $y_{k}\in Y_{k,l+1}$ and $\m_{y_{k}}$-a.e. $x_{k}$,}
we have $P(x_{0},\dots,x_{k-1},x',x_{k})$.

Using \eqref{eq:motimesm} we can rewrite the emphasized part of the statement as
``for $\m_{x_{0},\dots,x_{k-1}}$-a.e. $x'$,
for every $\tilde y_{k}\in Y_{k,l}$,
for every ergodic component $\mu_{e}$ of $(\m_{\tilde y_{k}})^{2}$ from a fixed full measure set, for $\mu_{e}$-a.e. $x_{k}$''
The conclusion follows by Lemma~\ref{lem:cube-change-order} and Lemma~\ref{lem:m-tilde-m}.
\end{proof}
\begin{lemma}
\label{lem:cube-meas}
Let $l,l'\in\N$ and assume \ref{eq:cf}$(k,l+l')$.
Then for $[l]$-u.a.e. $\vec x_{0}=(x_{0},\dots,x_{k})$,
for $\m_{\vec x_{0}}$-a.e. $\vec x_{1}$, \ldots, for $\m_{\vec x_{0},\dots,\vec x_{l'-1}}$-a.e. $\vec x_{l'}$
the ergodic averages of the function $\otimes_{i=0}^{k}f_{i}^{[l+l']}$ converge to zero at $(\vec x_{0},\dots,\vec x_{l'})$.
\end{lemma}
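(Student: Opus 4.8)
The plan is to prove the lemma by induction on $l'$, with $l$ ranging over all of $\N$, stripping off one cube level at a time via Lemma~\ref{lem:cube-order}. For $l'=0$ there is nothing to prove: the assertion is exactly the characteristicity part of Theorem~\ref{thm:RTT-cube}, i.e.\ \RTT{k,l}, whose hypothesis \ref{eq:cf}$(k,l)$ coincides with \ref{eq:cf}$(k,l+0)$, so that under it the ergodic averages of $\otimes_{i=0}^{k}f_{i}^{[l]}$ vanish $[l]$-u.a.e.

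For the inductive step I would assume the lemma for $l'$ and every $l$, and prove it for $l'+1$. Granting \ref{eq:cf}$(k,l+l'+1)$, which is the same condition as \ref{eq:cf}$(k,(l+1)+l')$, the inductive hypothesis applied with $l+1$ in place of $l$ gives: for $[l+1]$-u.a.e.\ $\vec y_{0}$, for $\m_{\vec y_{0}}$-a.e.\ $\vec y_{1}$, \dots, for $\m_{\vec y_{0},\dots,\vec y_{l'-1}}$-a.e.\ $\vec y_{l'}$, the averages of $\otimes_{i=0}^{k}f_{i}^{[l+l'+1]}$ converge to zero at $(\vec y_{0},\dots,\vec y_{l'})$. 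I would then read the entire ``for $\m_{\vec y_{0}}$-a.e.\ $\vec y_{1}$, \dots'' tail as a single property $P(\vec y_{0})$ of a point $\vec y_{0}$ of $\prod_{i=0}^{k}X_{i}^{[l+1]}$ — this is legitimate because the fiber measures $\m_{\vec y_{0}},\m_{\vec y_{0},\vec y_{1}},\dots$ are all determined by $\vec y_{0}$ together with the points quantified after it — so that $P$ holds $[l+1]$-u.a.e. Feeding $P$ into Lemma~\ref{lem:cube-order} with parameter $l$ and using the identification $\prod_{i=0}^{k}X_{i}^{[l+1]}=\big(\prod_{i=0}^{k}X_{i}^{[l]}\big)^{2}$, I obtain: for $[l]$-u.a.e.\ $\vec x_{0}$, for $\m_{\vec x_{0}}$-a.e.\ $\vec x_{1}$, the property $P\big((\vec x_{0},\vec x_{1})\big)$ holds. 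Unwinding $P$ and relabelling $\vec y_{0}=(\vec x_{0},\vec x_{1})$ and $\vec y_{j}=\vec x_{j+1}$ for $1\le j\le l'$, this is precisely the assertion of the lemma for $l'+1$; here one has to check that the return-times measure $\m_{\vec y_{0}}$ attached to the $[l+1]$-cube point $(\vec x_{0},\vec x_{1})$ is what the statement denotes $\m_{\vec x_{0},\vec x_{1}}$, and likewise for the subsequent fiber measures, so that all labels match.

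I do not expect a genuine obstacle here: the analytic substance is entirely contained in Lemma~\ref{lem:cube-order} (and through it in Lemmas~\ref{lem:cube-change-order} and~\ref{lem:m-tilde-m} and the characteristicity half of Theorem~\ref{thm:RTT-cube}). The one point that genuinely needs care is the bookkeeping — tracking which cube space $\prod_{i}X_{i}^{[l+j]}$ each $\vec x_{j}$ lives in, and confirming that splitting a single point of $\prod_{i}X_{i}^{[l+1]}$ into the pair $(\vec x_{0},\vec x_{1})$ is consistent with the fiber-measure labels in the statement, via $X^{[m+1]}=(X^{[m]})^{2}$.
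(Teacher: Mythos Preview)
Your proposal is correct and matches the paper's proof exactly: induction on $l'$, with the base case $l'=0$ being the characteristicity half of Theorem~\ref{thm:RTT-cube} and the inductive step handled by applying the hypothesis at level $(l+1,l'-1)$ and then invoking Lemma~\ref{lem:cube-order}. The bookkeeping caution you flag about identifying $\m_{(\vec x_{0},\vec x_{1})}$ at the $[l+1]$ level with what the statement calls $\m_{\vec x_{0},\vec x_{1}}$ is the only point the paper leaves implicit, and it is harmless once one reads the iterated fiber measures as the successive cube-measure construction \eqref{eq:mul+1} over the ergodic system $\m_{\vec x_{0}}$.
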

Again, the tensor product $\otimes_{i=0}^{k}f_{i}^{[l+l']}$ should be arranged in a different order, but in our opinion the above notation makes our goal more clear: it is not the function but the order in which we build the product space that changes.
\begin{proof}
We use induction on $l'$.
The case $l'=0$ is precisely Theorem~\ref{thm:RTT-cube}.
Assume that the conclusion is known for $l+1$ and $l'-1$.
The claim for $l$ and $l'$ follows by Lemma~\ref{lem:cube-order}.
\end{proof}
\begin{corollary}
\label{cor:orth}
Let $l,l'\in\N$ and assume \ref{eq:cf}$(k,l+l')$.
Then for $[l]$-u.a.e. $x_{0},\dots,x_{k}$
we have
$f_{0}^{[l]}\otimes\dots\otimes f_{k}^{[l]} \perp\HKZ_{l'}(\m_{x_{0},\dots,x_{k}})$.
\end{corollary}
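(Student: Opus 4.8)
The plan is to rewrite the orthogonality relation as a statement about vanishing ergodic averages on an iterated cube space, in a form to which Lemma~\ref{lem:cube-meas} applies directly. By Corollary~\ref{cor:m-ergodic} the measure $\m_{\vec x}$ is ergodic for $[l]$-u.a.e.\ $\vec x$, so $\HKZ_{l'}(\m_{\vec x})$ is defined and $\otimes_{i=0}^{k}f_{i}^{[l]}\perp\HKZ_{l'}(\m_{\vec x})$ is equivalent to $\|\otimes_{i=0}^{k}f_{i}^{[l]}\|_{U^{l'+1}(\m_{\vec x})}=0$. By the second expression in~\eqref{eq:uniformity-seminorm-integral} and the pointwise ergodic theorem this is in turn equivalent to the vanishing, at $\m_{\vec x}^{[l']}$-a.e.\ point, of the ergodic averages of $\otimes_{\epsilon\in\{0,1\}^{l'}}\otimes_{i=0}^{k}f_{i}^{[l]}$ under the diagonal transformation on $(\prod_{i}X_{i}^{[l]})^{[l']}$. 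Reordering the $2^{l'}$ copies of $\prod_{i}X_{i}^{[l]}$ and the matching tensor factors identifies this space with $\prod_{i}X_{i}^{[l+l']}$, the diagonal transformation with $\prod_{i}T_{i}^{[l+l']}$, and the function with $\otimes_{i=0}^{k}f_{i}^{[l+l']}$, where each $f_{i,\epsilon}$, $\epsilon\in\{0,1\}^{l}$, is merely re-indexed by $\{0,1\}^{l+l'}$ ignoring the first $l'$ coordinates; this leaves hypothesis~\ref{eq:cf}$(k,l+l')$ unchanged, since it only restricts which functions occur.

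Next I would relate the cube measure $\m_{\vec x}^{[l']}$ to the iterated disintegration appearing in Lemma~\ref{lem:cube-meas}. Since $\m_{\vec x}$ is ergodic,~\eqref{eq:mul+1-conventional} gives $\m_{\vec x}^{[1]}=\m_{\vec x}\otimes\m_{\vec x}$; iterating the cube construction~\eqref{eq:mul+1}, and using~\eqref{eq:motimesm} together with the identification of the ergodic components of $\m_{\vec x_{0},\dots,\vec x_{j}}\otimes\m_{\vec x_{0},\dots,\vec x_{j}}$ with the measures $\m_{\vec x_{0},\dots,\vec x_{j+1}}$ provided by Lemma~\ref{lem:m-tilde-m} and Lemma~\ref{lem:cube-change-order} (as in the proof of Lemma~\ref{lem:cube-order}), one sees that, under the reordering above, ``$\m_{\vec x}^{[l']}$-a.e.\ point'' unravels to ``for $\m_{\vec x}$-a.e.\ $\vec x_{0}$, for $\m_{\vec x_{0}}$-a.e.\ $\vec x_{1}$, \ldots, for $\m_{\vec x_{0},\dots,\vec x_{l'-1}}$-a.e.\ $\vec x_{l'}$'', with $(\vec x_{0},\dots,\vec x_{l'})$ the point of $\prod_{i}X_{i}^{[l+l']}$ assembled recursively from the $\vec x_{j}$. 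Hence the conclusion reduces to the statement that, for $[l]$-u.a.e.\ $\vec x$ and $\m_{\vec x}$-a.e.\ $\vec x_{0}$, the ergodic averages of $\otimes_{i=0}^{k}f_{i}^{[l+l']}$ vanish along the tower built over $\vec x_{0}$. Lemma~\ref{lem:cube-meas}, whose hypothesis~\ref{eq:cf}$(k,l+l')$ is the one assumed here, gives precisely this for $[l]$-u.a.e.\ $\vec x_{0}$; and Lemma~\ref{lem:uae-fiber-ae}, applied to the inner statement ``for $\m_{\vec x_{0}}$-a.e.\ $\vec x_{1},\ldots$, the averages vanish'', upgrades ``$[l]$-u.a.e.\ $\vec x_{0}$'' to ``for $[l]$-u.a.e.\ $\vec x$, $\m_{\vec x}$-a.e.\ $\vec x_{0}$'', which completes the proof.

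The seminorm and ergodic-theorem manipulations of the first step are routine. The genuine work, and the most likely source of error, is the identification in the second step: one has to match the recursive first-half/second-half block structure of $\m_{\vec x}^{[l']}$ with the tower $(\vec x_{0},\dots,\vec x_{l'})$ of Lemma~\ref{lem:cube-meas}, keeping careful track of which $2^{j}$-fold block of copies of $\prod_{i}X_{i}^{[l]}$ is attached to which $\vec x_{j}$, and checking at each level that the ergodic component arising over a given initial block is exactly the measure assigned to that block by Theorem~\ref{thm:return-times-disintegration}; all the inputs needed for this are among the results of the present section established earlier.
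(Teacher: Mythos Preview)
Your proposal is correct and follows exactly the route the paper indicates: you invoke Lemma~\ref{lem:cube-meas}, upgrade via Lemma~\ref{lem:uae-fiber-ae}, and translate the conclusion into the vanishing of the $U^{l'+1}$-seminorm using \eqref{eq:mul+1}, \eqref{eq:uniformity-seminorm-integral}, and the pointwise ergodic theorem. The paper's own proof is a single sentence listing precisely these ingredients; you have simply unpacked the measure-identification step (matching the iterated cube measure $\m_{\vec x}^{[l']}$ with the tower $\m_{\vec x_0},\m_{\vec x_0,\vec x_1},\ldots$) more explicitly than the paper does, and your cautionary third paragraph correctly flags this as the only place where care is needed.
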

\begin{proof}
This follows from Lemma~\ref{lem:cube-meas} by Lemma~\ref{lem:uae-fiber-ae}, the definition of cube measures \eqref{eq:mul+1},
the characterization of uniformity seminorms \eqref{eq:uniformity-seminorm-integral} and the ergodic theorem.
\end{proof}
\begin{proof}[Proof of Theorem~\ref{thm:RTT-cube} for $k+1$]
Let $k,l\in\N$ be fixed, our objective is to prove \RTT{k+1,l}.
Assume first \ref{eq:cf}$(k,l+1)$.
Then Lemma~\ref{lem:cube-meas} with $l'=1$ states that
for $[l]$-u.a.e. $x=(x_{0},\dots,x_{k})$,
for $\m_{x_{0},\dots,x_{k}}$-a.e. $x'$,
for any $f_{i,\epsilon}\in D_{i}$
we have
\[
\lim_{N}\aveFN \otimes_{i=0}^{k}f_{i}^{[l]}((\otimes_{i=0}^{k}T_{i}^{[l]})^{n}x)
\cdot \otimes_{i=0}^{k}f_{i}^{[l]}((\otimes_{i=0}^{k}T_{i}^{[l]})^{n}x')
= 0.
\]
For $[l]$-u.a.e. $x_{0},\dots,x_{k}$ we obtain genericity w.r.t.\ $\m_{x_{0},\dots,x_{k}}$ by Theorem~\ref{thm:return-times-disintegration}, ergodicity of $\m_{x_{0},\dots,x_{k}}$ by Corollary~\ref{cor:m-ergodic} and orthogonality of $\otimes_{i=0}^{k} f_{i}^{[l]}$ to the Kronecker factor of $\m_{x_{0},\dots,x_{k}}$ by Corollary~\ref{cor:orth}, so Proposition~\ref{prop:BFKO} with $X=(X_{0}^{[l]}\times\dots\times X_{k}^{[l]},\m_{x_{0},\dots,x_{k}})$ and $Y=(X_{k+1}^{[l]},\m_{y_{k+1}})$ implies the claimed convergence to zero $[l]$-u.a.e.

This takes care of the terms $f_{i,\epsilon,\perp}$ in the splittings $f_{i,\epsilon} = f_{i,\epsilon,\perp}+f_{i,\epsilon,\HKZ,j}+f_{i,\epsilon,err,j}$ given by \ref{eq:dec}$(k+l+1-i)$ (resp. $k+l$ for $i=0$).
By an approximation argument like in the proof of Lemma~\ref{lem:m-tilde-m} it suffices to consider the main terms, so we may assume that $\prod_{i=0}^{k} f_{i}^{[l]}((T_{i}^{[l]})^{n}x)$ is a nilsequence.
The claimed convergence a.e. in $x_{k+1}$ then follows from Theorem~\ref{thm:WW}.

Finally, assume \ref{eq:cf}$(k+1,l)$.
This means that we have either \ref{eq:cf}$(k,l+1)$ or $f_{k+1,\epsilon}\perp\HKZ_{l+1}(X_{k+1})$ for some $\epsilon$.
In the former case the limit is zero $[l]$-u.a.e.\ by the above argument and in the latter case by definition of $Y_{k+1,l}$ and Lemma~\ref{lem:Z1-char-weight}.
\end{proof}

\section{Wiener-Wintner return times theorem for nilsequences}
\label{sec:ww}
The first step in the proof is the identification of characteristic factors in the spirit of \cite{MR1357765}*{\textsection 4}.
\begin{lemma}
\label{lem:HKZ-char-for-WWRTT}
Let $f_{i}\in L^{\infty}(X_{i})$, $i=0,\dots,k$, and assume \ref{eq:cf}$(k,l)$.
Then for u.a.e.\ $x_{0},\dots,x_{k}$ and every $l$-step nilsequence $(a_{n})$ we have
\begin{equation}
\label{eq:ave-uniform}
\lim_{N\to\infty} \aveFN a_{n} \prod_{i=0}^{k}f_{i}(T_{i}^{n}x_{i}) = 0.
\end{equation}
\end{lemma}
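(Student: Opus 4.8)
The plan is to deduce Lemma~\ref{lem:HKZ-char-for-WWRTT} from the cube-space Theorem~\ref{thm:RTT-cube}, more precisely from the characteristic-factor statement \ref{eq:cf}$(k,l)$ applied on the cube spaces $X_i^{[l]}$, by exploiting the fact that an $l$-step nilsequence evaluated along the diagonal of a cube space can be recovered from the cube structure. The key observation is that if $(a_n) = F(R^n z)$ is a basic $l$-step nilsequence on a nilmanifold $Z = G/\Gamma$, then $a_n$ relates to the values of a tensor-type function on $Z^{[l]}$ at the orbit point $(R^{[l]})^n z^{[l]}$, where $z^{[l]}$ is the diagonal point; this is the standard ``parallelepiped'' description of nilsequences coming from Host--Kra theory. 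So the weighted average $\aveFN a_n \prod_i f_i(T_i^n x_i)$ can be rewritten, after adjoining $Z$ as an auxiliary system $X_{k+1}$ with $f_{k+1}$ a suitable component of $F$, as an unweighted cube average of the form appearing in \RTT{k+1,l} — or, keeping the nilsequence as a weight, as the averages handled in the last paragraph of the proof of Theorem~\ref{thm:RTT-cube} via Theorem~\ref{thm:WW}.

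Concretely, I would proceed as follows. First, by the density clause in Definition~\ref{def:D} and a routine approximation it suffices to prove \eqref{eq:ave-uniform} for $f_i \in D_i$, and by uniform approximation of $l$-step nilsequences by basic ones it suffices to treat a fixed basic $l$-step nilsequence $(a_n)$, uniformly over $x_0,\dots,x_k$ in the relevant universal set. Second, I invoke Theorem~\ref{thm:RTT-cube}, specifically the vanishing conclusion under \ref{eq:cf}$(k,l)$: for $[l]$-u.a.e.\ $x_0,\dots,x_k$ the ergodic averages of $\otimes_{i=0}^k f_i^{[l]}$ converge to zero. Third — and this is the heart of the matter — I relate the diagonal evaluation to the cube evaluation: choosing $f_{i,\epsilon} = f_i$ for the ``diagonal'' coordinate $\epsilon = \vec 0$ and $f_{i,\epsilon} = 1$ otherwise makes $f_i^{[l]}((T_i^{[l]})^n x_i^{[l]}) = f_i(T_i^n x_i)$ when $x_i^{[l]}$ is the diagonal point; and the hypothesis \ref{eq:cf}$(k,l)$ is satisfied by this choice precisely because the original $f_i$'s are assumed to satisfy it. To bring in the nilsequence weight $a_n$, I incorporate it through the $U^{l+1}$-structure: a basic $l$-step nilsequence is, up to the error terms controlled by the Host--Kra decomposition, exactly a sequence of the form arising from a nilsystem, so $a_n \prod_i f_i(T_i^n x_i)$ falls under the ``main term is a nilsequence'' reduction used at the end of the proof of Theorem~\ref{thm:RTT-cube}, and Theorem~\ref{thm:WW} then gives convergence to zero along the fully-generic points.

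Fourth, I must produce the nested universal sets required by Definition~\ref{def:uae}. These are assembled in exactly the same pattern as in the proof of Theorem~\ref{thm:RTT-cube} for $k+1$: one first fixes the full-measure sets $\tilde X_0,\dots,\tilde X_k$ coming from \RTT{k,\cdot} and Theorem~\ref{thm:return-times-disintegration} (so that $x_0,\dots,x_k$ is generic for $\m_{x_0,\dots,x_k}$, that measure is ergodic by Corollary~\ref{cor:m-ergodic}, and $\otimes_i f_i^{[l]} \perp \HKZ_1(\m_{x_0,\dots,x_k})$ — here Corollary~\ref{cor:orth} uses \ref{eq:cf}$(k,l)$), and then one applies Proposition~\ref{prop:BFKO} with $X = (\prod_i X_i^{[l]}, \m_{x_0,\dots,x_k})$; the hypothesis ``$\aveFN F(T^n x) F(T^n \xi) \to 0$ for a.e.\ $\xi$'' that feeds Proposition~\ref{prop:BFKO} is supplied by Lemma~\ref{lem:cube-meas} with $l' = 1$ exactly as in the theorem's proof. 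The nilsequence weight is then handled inside this framework: since $(a_n)$ is an $l$-step nilsequence and $l \le k+l$, the product $a_n \prod_i f_i(T_i^n x_i)$ is orthogonal to the relevant characteristic factor and Theorem~\ref{thm:WW} delivers the limit zero.

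The main obstacle I anticipate is the bookkeeping in the third step: making precise the claim that an $l$-step nilsequence weight can be absorbed into, or read off from, the cube-space function so that \ref{eq:cf}$(k,l)$ genuinely applies. One must be careful that multiplying by $a_n$ does not destroy the orthogonality hypothesis — this works because an $l$-step nilsequence is measurable with respect to $\HKZ_l$ of the auxiliary nilsystem and hence, by the Cauchy--Schwarz--Gowers inequality \eqref{eq:CSG} and the behaviour of the $U^{l+1}$ seminorm under multiplication by bounded $\HKZ_l$-measurable functions, $f_0 \perp \HKZ_{k+l}$ forces the weighted tensor product to remain orthogonal to the corresponding pro-nilfactor of the enlarged cube system. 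Once this is set up cleanly, everything else is a reprise of the argument already carried out for Theorem~\ref{thm:RTT-cube} in the previous section, with $X_{k+1}$ replaced by (a pro-nilfactor carrying) the nilsequence and the genericity requirement replaced by the full-genericity hypothesis in Theorem~\ref{thm:WW}.
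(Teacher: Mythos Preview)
Your proposal is substantially more complicated than necessary and contains a genuine gap, while the paper's proof is three lines.

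The concrete problem is in your third and fourth steps. You attempt to pass to the cube space $X_i^{[l]}$ and evaluate at the diagonal point $x_i^{[l]}$, then invoke the $[l]$-u.a.e.\ vanishing from Theorem~\ref{thm:RTT-cube}. But ``$[l]$-u.a.e.'' means full measure with respect to the ergodic components $\m_{y_i}$ of $\mu_i^{[l]}$; the diagonal $\{(x,\dots,x)\}\subset X_i^{[l]}$ is a $\mu_i^{[l]}$-null set for $l\geq 1$, so the $[l]$-u.a.e.\ conclusion tells you nothing about diagonal points. In the same vein, you claim that Corollary~\ref{cor:orth} at cube level $l$ yields $\otimes_i f_i^{[l]}\perp\HKZ_1(\m_{x_0,\dots,x_k})$ from \ref{eq:cf}$(k,l)$; but Corollary~\ref{cor:orth} with cube parameter $l$ and target $\HKZ_{1}$ (i.e.\ $l'=1$) requires \ref{eq:cf}$(k,l+1)$, which is strictly stronger than the hypothesis you have. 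Finally, Proposition~\ref{prop:BFKO} is the wrong tool here: it is designed to pass to an \emph{arbitrary} new system $(Y,S)$, whereas an $l$-step nilsequence is a very structured weight for which Theorem~\ref{thm:WW} applies directly.

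The paper's argument stays at cube level $0$ throughout. Apply Corollary~\ref{cor:orth} with its $l=0$ and $l'$ equal to the present $l$: under \ref{eq:cf}$(k,l)$ this gives $\otimes_{i=0}^{k} f_i \perp \HKZ_{l}(\m_{x_0,\dots,x_k})$ for u.a.e.\ $x_0,\dots,x_k$. Theorem~\ref{thm:return-times-disintegration} (again at level $0$) gives that u.a.e.\ $\vec x$ is fully generic for $\otimes_i f_i$ with respect to $\m_{\vec x}$. Now Theorem~\ref{thm:WW}, applied on the ergodic system $(\prod_i X_i,\m_{\vec x})$ to the function $\otimes_i f_i$, immediately yields \eqref{eq:ave-uniform} for every $l$-step nilsequence. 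No BFKO, no diagonal embedding, no decomposition into main and error terms is needed.
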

One can formulate a uniform version of this result along the lines of the uniform Wiener-Wintner theorem \cite{arxiv:1208.3977}*{Theorem 4.1} but this would require additional notation.
\begin{proof}
By Corollary~\ref{cor:orth} we have $\otimes_{i=0}^{k}f_{i} \perp \HKZ_{l}(\m_{\vec x})$ for u.a.e.\ $\vec x\in X_{0}\times\dots\times X_{k}$ and by Theorem~\ref{thm:return-times-disintegration} u.a.e.\ $\vec x$ is fully generic for $\otimes_{i} f_{i}$ w.r.t.\ $\m_{\vec x}$.
The claim follows by Theorem~\ref{thm:WW}.
\end{proof}
Theorem~\ref{thm:WWRTT} now follows from equidistribution results on nilmanifolds.
\begin{proof}[Proof of Theorem~\ref{thm:WWRTT}]
Fix $k,l\in\N$.
By Lemma~\ref{lem:HKZ-char-for-WWRTT} it suffices to consider $f_{i}\in L^{\infty}(\HKZ_{l+k+1-i}(X_{i}))$.
By the pointwise ergodic theorem we can assume that each $f_{i}$ is a continuous function on a nilfactor of $X_{i}$.
The conclusion follows from equidistribution results on nilmanifolds \cite{MR2122919}*{Theorem B}.
\end{proof}

\bibliography{pzorin-ergodic-MR,pzorin-ergodic-preprints}
\end{document}